%% Versión revisada enviada a la %revista el 16 de agosto de 2025

\documentclass[11pt]{amsart}
\usepackage{lineno}
\textwidth=14.5cm 
\textheight=19cm 
\oddsidemargin=1cm 
\evensidemargin = 0.5 cm
\usepackage{amsmath}
\usepackage{amsfonts}
\usepackage{amssymb}
\usepackage{amsxtra}
\usepackage{amsthm}
\usepackage{amsmath,amssymb,amsfonts,amsthm,color,latexsym}
\usepackage{xcolor}
\usepackage{float}

\newcommand{\C}{\mathbb{C}}
\newcommand{\F}{\mathcal{F}}

\newcommand{\B}{\mathcal{B}}

\newcommand{\ord}{\text{ord}}

\newcommand{\sep}{\mathrm{Sep}}

\newtheorem{theorem}{Theorem}[section]
\newtheorem{example}[theorem]{Example}
\newtheorem{remark}[theorem]{Remark}

\newtheorem{proposition}[theorem]{Proposition}
\newtheorem{corollary}[theorem]{Corollary}

\newtheorem{maintheorem}{Theorem}

\usepackage[pagebackref]{hyperref}

\title[On some indices of foliations and applications]{On some indices of foliations and applications}

\date{\today}

\author[A. Fern\'andez-P\'erez]{Arturo Fern\'andez-P\'erez}
\address[Arturo Fern\'{a}ndez-P\'erez] {Department of Mathematics. Federal University of Minas Gerais. Av. Ant\^onio Carlos, 6627 
CEP 31270-901\\
Pampulha - Belo Horizonte - Brazil\\}
\email{fernandez@ufmg.br}

\author[E. R.  Garc\'{i}a Barroso]{Evelia R. Garc\'{i}a Barroso}
\address[Evelia R. Garc\'{i}a Barroso]{Departamento de Matem\'{a}ticas, Estad\'{\i}stica e Investigaci\'on Ope\-rativa.
Instituto Universitario de Matem\'aticas y Aplicaciones (IMAULL). Universidad de La Laguna. Apartado de Correos 456. 38200 La Laguna, Te\-ne\-rife, Spain.}
\email{ergarcia@ull.es}

\author[N. Saravia-Molina]{Nancy Saravia-Molina}
\address[Nancy Saravia-Molina]{Dpto. Ciencias - Secci\'{o}n Matem\'{a}ticas, Pontificia Universidad Cat\'{o}lica del Per\'{u}, Av. Universitaria 1801,
San Miguel, Lima 32, Peru}
\email{nsaraviam@pucp.edu.pe}

\subjclass[2020]{Primary 32S65 - 32M25}

\keywords{Holomorphic foliations, Milnor number, Tjurina number, Multiplicity of a foliation along a divisor of separatrices, GSV-index.} 

%\thanks{
%The authors gratefully acknowledge the support of Universidad de La Laguna (Tenerife, Spain), where part of this work was done: Spanish grants PID2019-105896GB-I00 funded by MCIN/AEI/10.13039/501100011033 and PID2023-149508NB-I00 funded by 
%MICIU/AEI
%/10.13039/501100011033 and by
%FEDER, UE. This work was  also funded by the Direcci\'on de Fomento de la Investigaci\'on at the PUCP through grant DFI-2024-PI1100. 
%The first author acknowledges support from CNPq Projeto Universal 408687/2023-1 "Geometria das Equa\c{c}\~oes Diferenciais Alg\'ebricas" and CNPq-Brazil PQ-306011/2023-9. We are grateful to Marcelo E. Hernandes and Hern\'an Neciosup Puican for their remarks.}

\begin{document}
%\linenumbers

\begin{abstract}
In this paper we establish a relationship between the Milnor number, the $\chi$-number, and the Tjurina number of a foliation with respect to an effective balanced divisor of separatrices. Moreover, using the G\'omez-Mont--Seade--Verjovsky index, we prove that the difference between the multiplicity and the Tjurina number of a foliation with respect to a reduced curve is independent of the foliation. We also derive a local formula for the Tjurina number of a foliation with respect to a reduced curve. From a global point of view, these results lead to the following consequences: we provide a new proof of a global result regarding the multiplicity of a foliation due to Cerveau-Lins Neto and a new proof of a Soares's inequality for the sum of the Milnor number of an invariant curve of a foliation. Additionally, we obtain bounds for the global Tjurina number of a foliation on the complex projective plane. Finally, we provide an answer to the conjecture posed by Alc\'antara and Mozo-Fern\'andez about foliations on the complex projective plane having a unique singularity.  
\end{abstract}
%\dedicatory{In memory of Arkadiusz P\l oski}
\maketitle

%\textcolor{magenta}{>A\~nadimos los ORCID en las direcciones de cada uno de nosotros?}
\section{Introduction}
The aim of this paper is to study indices of local and global foliations in the complex projective plane \(\mathbb{P}^2_{\mathbb{C}}\). We focus particularly on four indices of the foliations: the \textit{Milnor number}, the \textit{G\'omez-Mont--Seade--Verjovsky index} (abbreviated as the GSV-index), the \textit{multiplicity}, and the \textit{Tjurina number} with respect to an invariant curve. 
\par First, using known local formulas for the GSV-index and the Tjurina number of a singular curve, we present a local formula for the Tjurina number of a foliation (see Proposition \ref{prop_tju}). Next, we prove in Proposition \ref{prop_1} that the difference between the multiplicity and the Tjurina number of a foliation with respect to a reduced curve does not depend on the foliation. More specifically, let $\F$ be a germ of a singular holomorphic foliation at $(\C^2,p)$ and let $C$ be any $\F$-invariant reduced curve. Denoting by $\mu_p(\F,C)$ and $\tau_p(\F,C)$ the multiplicity and Tjurina number of $\F$ along $C$, respectively, we obtain
\[\mu_p(\F,C)-\tau_p(\F,C)=\mu_p(C)-\tau_p(C),\]
 \noindent where $\mu_p(C)$, $\tau_p(C)$ are the Milnor and Tjurina numbers of $C$. 
This result leads to several applications; for instance, we show that 
\[GSV_p(\F,C) < 4\tau_p(\F,C) - 3\mu_p(\F,C),\]
where $GSV_p(\F,C)$ denotes the $GSV$-index of $\F$ along $C$ at $p$.
Some applications also are presented in  Sections \ref{gsv_mul} and \ref{applications}, for example, we provide a positive answer to a question posed in \cite{FP-GB-SM2024}.
\par In Section \ref{multi}, following the terminology of balanced divisor of separatrices of a foliation introduced by Genzmer \cite{Genzmer}, we present Proposition \ref{iguales} that generalizes Corollary B of \cite{FP-GB-SM2021}. We recall that a foliation $\F$ is said to be a \textit{generalized curve} if there are no saddle-nodes in its reduction process of singularities. This concept was introduced by Camacho, Lins Neto and Sad in \cite{CLS} and the non-dicritical case, delimits a family of foliations whose topology is closely related to that of their separatrices (i.e. local invariant curves). Generalized curve foliations are part of the broader family of \textit{second type foliations}, introduced by Mattei and Salem in \cite{mattei}. Foliations in this family may admit saddle-nodes in the reduction process of singularities, provided that they are not \textit{tangent saddle-nodes} (see \cite[Definition 2.1]{FP-M}). In order to study second type foliations, the $\chi$-number of $\F$ was introduced in \cite[Section 3]{FP-GB-SM2021}. This number, denoted by $\chi_p(\F)$, can be interpreted as the difference $\mu_p(\F)-\mu_p(\F,\B)$, where $\mu_p(\F)$ is the Milnor number of $\F$ at $p$ and $\mu_p(\F,\B)$ is the multiplicity of $\F$ along a balanced divisor of separatrices $\B$. For more details on balanced divisor of separatrices of a foliation, we refer the reader to \cite{Genzmer} and \cite{Genzmer-Mol}.
\par Now, we can establish our main result as follows:  
\begin{maintheorem}\label{DG}
Let $\F$ be a germ of a singular holomorphic foliation  at $(\C^2,p)$, and $\B$ be an effective primitive balanced divisor of separatrices for $\F$ at $p$.  
  Then
    \[
    \frac{\mu_{p}(\mathcal{F})}{\tau_{p}(\mathcal{F},\mathcal{B})+\chi_{p}(\mathcal{F})}<\frac{4}{3}.
    \] 
Moreover, if  $\mathcal{F}$ is of second type then
$\frac{\mu_{p}(\mathcal{F})}{\tau_{p}(\mathcal{F},\mathcal{B})}<\frac{4}{3}.$ In particular,  if  $\mathcal{F}$ is a non-dicritical foliation of second type, and $C$ is the total union of separatrices, then $\frac{\mu_{p}(\mathcal{F})}{\tau_{p}(\mathcal{F},C)}<\frac{4}{3}.$
\end{maintheorem}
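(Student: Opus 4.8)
The plan is to reduce the estimate to the Dimca--Greuel inequality for the reduced plane curve underlying $\B$, with Proposition~\ref{prop_1} and the defining identity of the $\chi$-number playing the role of bookkeeping. Since $\B$ is effective and primitive it can be regarded as a reduced $\F$-invariant curve, so Proposition~\ref{prop_1} applies with $C=\B$ and gives $\mu_{p}(\F,\B)-\tau_{p}(\F,\B)=\mu_{p}(\B)-\tau_{p}(\B)$. Using $\chi_{p}(\F)=\mu_{p}(\F)-\mu_{p}(\F,\B)$ to eliminate $\mu_{p}(\F,\B)$, the denominator appearing in the statement becomes
\[
\tau_{p}(\F,\B)+\chi_{p}(\F)=\mu_{p}(\F)-\bigl(\mu_{p}(\B)-\tau_{p}(\B)\bigr).
\]
Hence, once this quantity is seen to be positive, the asserted bound $\mu_{p}(\F)<\tfrac43\bigl(\tau_{p}(\F,\B)+\chi_{p}(\F)\bigr)$ is equivalent, after clearing the denominator, to the purely curve-theoretic inequality $4\bigl(\mu_{p}(\B)-\tau_{p}(\B)\bigr)<\mu_{p}(\F)$.

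To prove the latter I would combine two facts. The first is the Dimca--Greuel inequality $\mu_{p}(\B)/\tau_{p}(\B)<\tfrac43$ for the reduced plane curve germ $\B$, which rearranges precisely to $4\bigl(\mu_{p}(\B)-\tau_{p}(\B)\bigr)<\mu_{p}(\B)$ (this is the one and only place from which the strictness of the bound $\tfrac43$ originates), and which also yields $\mu_{p}(\B)-\tau_{p}(\B)<\tfrac13\tau_{p}(\B)\le\tfrac13\mu_{p}(\B)$, so that, once the second fact is in hand, the displayed denominator exceeds $\tfrac23\mu_{p}(\F)>0$. The second fact is the inequality $\mu_{p}(\B)\le\mu_{p}(\F)$ — the dicritical analogue of the classical statement that the Milnor number of a non-dicritical foliation dominates that of its separatrix — which I would quote from Proposition~\ref{iguales} (equivalently from \cite[Corollary~B]{FP-GB-SM2021} and the theory of balanced divisors of \cite{Genzmer}). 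Chaining the two, $4(\mu_{p}(\B)-\tau_{p}(\B))<\mu_{p}(\B)\le\mu_{p}(\F)$, which is exactly the required inequality; the degenerate case of $\B$ smooth is trivial, since then the bracket vanishes and the estimate reads $\mu_{p}(\F)<\tfrac43\mu_{p}(\F)$, valid because $\mu_{p}(\F)\ge 1$.

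For the ``moreover'' part: if $\F$ is of second type then $\chi_{p}(\F)=0$ by the characterization of second type foliations through the $\chi$-number \cite{FP-GB-SM2021}, so $\tau_{p}(\F,\B)+\chi_{p}(\F)=\tau_{p}(\F,\B)$ and the inequality just established reads $\mu_{p}(\F)/\tau_{p}(\F,\B)<\tfrac43$; and if in addition $\F$ is non-dicritical, its balanced divisor is unique and equals the total union $C$ of separatrices, which is effective and primitive, whence $\tau_{p}(\F,\B)=\tau_{p}(\F,C)$ and the last assertion follows. The algebra is entirely formal; the genuine content — and the step I expect to be the real obstacle, especially in the dicritical case — lies in the two quoted facts: the sharp Dimca--Greuel bound for plane curve singularities, and the domination $\mu_{p}(\B)\le\mu_{p}(\F)$, which requires knowing that the chosen effective balanced divisor is not ``too large''. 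Alternatively, one could package the same computation through the inequality $GSV_{p}(\F,C)<4\tau_{p}(\F,C)-3\mu_{p}(\F,C)$ announced in the introduction, which by Proposition~\ref{prop_1} is itself a reformulation of the Dimca--Greuel bound.
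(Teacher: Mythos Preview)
Your argument is essentially the paper's: both reduce the bound to Almir\'on's Dimca--Greuel inequality $4\bigl(\mu_p(\B)-\tau_p(\B)\bigr)<\mu_p(\B)$ together with the domination $\mu_p(\B)\le\mu_p(\F)$, and your bookkeeping via Proposition~\ref{prop_1} and $\chi_p(\F)=\mu_p(\F)-\mu_p(\F,\B)$ is exactly equivalent to the paper's route through Proposition~\ref{iguales} and Proposition~\ref{xi}. The ``moreover'' and ``in particular'' clauses are handled identically.

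The one point that needs correction is your source for the crucial inequality $\mu_p(\B)\le\mu_p(\F)$: this does \emph{not} follow from Proposition~\ref{iguales}, which is merely an identity and contains no sign information. The paper obtains it in two steps. First, because $\B$ is an effective primitive balanced divisor, one has $GSV_p(\F,\B)=\Delta_p(\F,\B)$, the polar excess number, which is nonnegative by \cite[(3.12)]{FP-M}; combined with \eqref{eq_5} this gives $\mu_p(\F,\B)\ge\mu_p(\B)$. Second, $\mu_p(\F)\ge\mu_p(\F,\B)$ by \eqref{eq:IaMineq} (nonnegativity of $\chi_p(\F)$). Chaining yields $\mu_p(\F)\ge\mu_p(\B)$. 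You rightly flagged this as the real obstacle; the missing ingredient is the identification of the GSV-index with the polar excess for balanced divisors, not anything in Proposition~\ref{iguales}.
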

The proof of Theorem \ref{DG} will be given in Section \ref{principal}.
Finally, we apply our local results in three ways. First, we provide a new proof of a result by Cerveau-Lins Neto \cite[Proposition \ref{prop_cl}]{Cerveau-LN} and a new proof of a Soares's inequality \cite[Theorem \ref{Soares ine}]{Soares} for the sum of the Milnor number of an invariant curve of a foliation. In Theorem \ref{tjurina}, we establish bounds for the global Tjurina number of a foliation on the complex projective plane. Lastly, we answer a question posed by Alc\'antara and Mozo-Fern\'andez \cite[p. 16]{Alcantara} regarding the existence of non-dicritical logarithmic foliations with a unique singular point and an invariant algebraic curve passing through the singularity.

\section{Basic tools}
\par  To establish the terminology and notation, we recall  some basic concepts of the theory of holomorphic foliations. Unless stated otherwise, throughout this text, $\F$ denotes a germ of a singular holomorphic foliation at $(\C^2,p)$. In local coordinates $(x,y)$ centered at $p$, the foliation $\F$ is given by a holomorphic 1-form
\begin{equation*}
\omega=P(x,y)dx+Q(x,y)dy,
\end{equation*}
or by its dual vector field
\begin{equation*}
v = -Q(x,y)\frac{\partial}{\partial{x}} + P(x,y)\frac{\partial}{\partial{y}},
\end{equation*}
where  $P(x,y), Q(x,y)   \in \mathbb {C}\{x,y\}$ are relatively prime and ${\mathbb C}\{x,y\}$ is the ring of complex convergent power series in two variables. 
 The \textit{algebraic multiplicity} $\nu_p(\F)$ is the minimum of the orders $\nu_p(P)$, $\nu_p(Q)$ at $p$ of the coefficients of a local generator of $\F$. 
\par Let $f(x,y)\in  \mathbb{C}[[x,y]]$, where ${\mathbb C}[[x,y]]$ is the ring of complex formal power series in two variables.  
We say that $C: f(x,y)=0$  is {\em invariant} by $\F$ or $\F$-\emph{invariant} if $$\omega \wedge d f=(f.h) dx \wedge dy,$$ for some $h\in \mathbb{C}[[x,y]]$. If $C$ is an irreducible $\F$-invariant curve then we will say that $C$ is a {\em separatrix} of $\F$ at $p$. The separatrix $C$ is analytical if $f$ is convergent.  We denote by $\sep_p(\F)$ the set of all separatrices of $\F$ at $p$. When $\sep_p(\F)$ is a finite set we will say that the foliation $\F$ is {\em non-dicritical} and the union of all elements of $\sep_p(\F)$ is called {\em total union of separatrices} of $\F$ at $p$. Otherwise, we will say that $\F$ is a {\em dicritical} foliation.

\subsection{GSV-index}
The GSV-index was introduced by G\'omez-Mont-Seade-Verjovsky in \cite{GSV}. This index is a topological invariant for vector fields on singular complex varieties and is closely related to the Euler-Poincar\'e characteristic of the Milnor fiber (see, for instance, \cite[Theorem 3.1]{Callejas}). It also relates to the Schwartz index and local Euler obstruction through a proportionality theorem \cite[Theorem 3.1]{brasselet}. Below we will present Brunella’s definition of the GSV-index for one dimensional holomorphic foliations (see \cite{index}).
\par Let $\F: \omega=0$ be a singular foliation at $(\mathbb C^2,p)$. Let $C:f(x,y)=0$ be an $\F$-invariant curve, where $f(x,y)\in \C[[x,y]]$ is reduced. Then, as in the convergent case, there are $g,h\in \C[[x,y]]$ (depending on $f$ and $\omega$), with $f$ and $g$ and $f$ and $h$ relatively prime and a $1$-form $\eta$ (see \cite[Lemma 1.1 and its proof]{suwa1995}) such that 
\begin{equation}\label{eq:suwa}
g\omega=hdf+f\eta.
\end{equation}

Expression (\ref{eq:suwa}) was first proved by K. Saito \cite{Saito2}. Subsequently, A. Lins Neto \cite{neto} obtained the same result in the case where $f$ is irreducible, and T. Suwa \cite{suwa1995}, in the case where $f$ is reduced, showed that the functions $f,g,h$ can be chosen in such a way that each pair $(f,g)$ and $(f,h)$ is relatively prime.

The {\em GSV-index} of the foliation $\F$  at $(\C^2,p)$ with respect to an analytic $\F$-invariant  curve $C$ is
\begin{equation}
\label{eq:GSV}
GSV_p(\F,C)=\frac{1}{2\pi i}\int_{\partial C}\frac{g}{h}d\left(\frac{h}{g}\right),
\end{equation}
where $g,h\in \C\{x,y\}$ are from \eqref{eq:suwa}. 
%This index was introduced in \cite{GSV} but here we follow the presentation of \cite{index}. 
\subsection{The multiplicity of a foliation along a separatrix.}
Let $\F$ be a germ of a singular foliation at $(\C^2,p)$ induced by the vector field $v$ and $B$ be a separatrix of $\F$ at $p$. Let $\gamma: (\C,0)\to(\C^2,p)$ be a primitive parametrization of $B$. Camacho-Lins Neto-Sad \cite[Section 4]{CLS} defined 
the \textit{multiplicity of $\F$ along $B$ at $p$} as
$\mu_p(\F,B):=\ord_t \theta(t)$,
where $\theta(t)$ is the unique vector field at $(\C,0)$ such that 
$\gamma_{*} \theta(t)=v\circ\gamma(t)$. 
If $\omega=P(x,y)dx+Q(x,y)dy$ is a 1-form inducing $\F$ and $\gamma(t)=(x(t),y(t))$, we have
\begin{equation*}
\theta(t)=
\begin{cases}
-\frac{Q(\gamma(t))}{x'(t)} & \text{if $x(t)\neq 0$}
\medskip \\
 \frac{P(\gamma(t))}{y'(t)} & \text{if $y(t)\neq 0$}.
\end{cases}
\end{equation*}
\subsection{The Tjurina number of a foliation along an invariant reduced curve}
Let $\F$ be the germ of holomorphic foliation defined by $\omega=P(x,y)dx+Q(x,y)dy$ at $(\C^2,p)$, and let $C: f(x,y)=0$ be an \textit{invariant reduced curve}. The \textit{Tjurina number} of $\F$ with respect to $C$ is by definition 
\[\tau_p(\F, C):=\dim_{\C}\C[[x,y]]/(P,Q,f).\]
This number appears for the first time in \cite{Gomez}, and the terminology of Tjurina number of $\F$ with respect to  $C$ was given in \cite[p. 159]{Cano}.

\section{Local formulas for the Tjurina number of a foliation}
Let $\F$ be a germ of a singular holomorphic foliation at $(\C^2,p)$, and let $C=\cup_{j=1}^{r}C_j$ be an $\F$-invariant reduced curve such that each $C_j$ is irreducible. Then, from \cite[Theorem 6.5.1]{Wall},  \cite[Proposition 3.2]{H-H}, \cite[Equation (2.2), p. 329]{K-S}, and \cite[p. 29]{Brunella-book}, we obtain the following formulas, respectively:
\begin{eqnarray}
\mu_p(C)&=&\sum_{j=1}^{r}\mu_p(C_j)+2\left(\sum_{1\leq i<j\leq r}I_p(C_i,C_j)\right)-r+1\label{eq_2},\\
\tau_p(C)&=&\sum_{j=1}^{r}\tau_p(C_j)+\left(\sum_{1\leq i<j\leq r}I_p(C_i,C_j)\right)+\Theta\label{eq_tjurina},\\
\mu_p(\F,C)&=&\sum_{j=1}^{r}\mu_p(\F,C_j)-r+1\label{eq_3},\\
GSV_p(\F,C)&=&\left(\sum_{j=1}^{r}GSV_p(\F,C_j)\right)-2\sum_{1\leq i<j\leq r}I_p(C_i,C_j),\label{eq_4}
\end{eqnarray}
where $I_p(C_i,C_j)$ denotes the intersection multiplicity of the branches $C_i$ and $C_j$ at $p$, and $\Theta$ is a number that depends on the module of K\"ahler differentials of $C$, whose explicit formula can be found in \cite[Theorem 5.1]{H-H}. Moreover, the \textit{Milnor number} $\mu_{p}(\F)$ of the foliation $\F$ at a point $p$, defined by the $1$-form $\omega=P(x,y)dx+Q(x,y)dy$, is given by 
$\mu_p(\F)=I_p(P,Q).$
Recall that we assume $P$ and $Q$ to be coprime, so $\mu_p(\F)$ is a non-negative integer. In \cite[Theorem A]{CLS}, it was proved that the Milnor number of a foliation is  a topological invariant. 

\par Analogous to the local formula for the Tjurina number of curves, we provide an local formula for the Tjurina number of a foliation $\F$ with respect to a reduced $\F$-invariant curve $C$, in terms of the Tjurina numbers of $\F$ with respect to the irreducible components of $C$.

\begin{proposition}\label{prop_tju}
Let $\F$ be a germ of a singular holomorphic foliation at $(\C^2,p)$ and let $C=\cup_{j=1}^{r}C_j$ be a $\F$-invariant reduced curve. Then
\begin{eqnarray*}
\tau_p(\mathcal{F},C)&=& \sum^{r}_{j=1}\tau_p(\mathcal{F},C_j)-\left(\sum_{1\leq i<j\leq r} I_p(C_i,C_j)\right)+\Theta.
\end{eqnarray*}
\end{proposition}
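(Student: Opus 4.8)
The plan is to first establish, for an arbitrary reduced $\F$-invariant curve $C$ at $p$ (for which the occurring Tjurina numbers are finite, as is implicit in the statement), the identity
\[
\tau_p(\F,C)=GSV_p(\F,C)+\tau_p(C),
\]
and then to deduce the proposition by feeding this identity, together with the adjunction formulas \eqref{eq_4} and \eqref{eq_tjurina}, into a short bookkeeping computation, applying the identity once to $C$ and once to each branch $C_j$.

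To prove the identity I would start from the Suwa decomposition \eqref{eq:suwa}, $g\omega=hdf+f\eta$. Writing $\eta=\eta_1\,dx+\eta_2\,dy$ and comparing the coefficients of $dx$ and $dy$ gives $gP=hf_x+f\eta_1$ and $gQ=hf_y+f\eta_2$, whence the equality of ideals of $\C[[x,y]]$
\[
(f,gP,gQ)=(f,hf_x,hf_y).
\]
By the construction of \eqref{eq:suwa}, $f$ is coprime to $g$ and to $h$, so the classes of $g$ and $h$ are non-zero-divisors in $\C[[x,y]]/(f)$; combined with the finiteness hypotheses this gives the elementary colength additivity $\dim_\C\C[[x,y]]/(f,uA_1,uA_2)=\dim_\C\C[[x,y]]/(f,u)+\dim_\C\C[[x,y]]/(f,A_1,A_2)$ whenever $u$ is a non-zero-divisor modulo $f$ and $(f,A_1,A_2)$ has finite colength. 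Applying it to both sides of the ideal equality and recognizing $\dim_\C\C[[x,y]]/(f,P,Q)=\tau_p(\F,C)$ and $\dim_\C\C[[x,y]]/(f,f_x,f_y)=\tau_p(C)$ yields
\[
\dim_\C\C[[x,y]]/(f,g)+\tau_p(\F,C)=\dim_\C\C[[x,y]]/(f,h)+\tau_p(C).
\]
Finally I would rewrite the integrand of \eqref{eq:GSV} as $\frac{g}{h}d\left(\frac{h}{g}\right)=d\log h-d\log g$ and evaluate the residues branch by branch: since $f$ is coprime to $g$ and to $h$, each contour integral counts a finite vanishing order, so $\frac{1}{2\pi i}\int_{\partial C}d\log h=\dim_\C\C[[x,y]]/(f,h)$ and likewise for $g$, giving $GSV_p(\F,C)=\dim_\C\C[[x,y]]/(f,h)-\dim_\C\C[[x,y]]/(f,g)$. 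Subtracting produces the identity.

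Granting the identity, the proposition is bookkeeping. Using the identity for $C$, then \eqref{eq_4} for $GSV_p(\F,C)$ and \eqref{eq_tjurina} for $\tau_p(C)$, and finally the identity for each irreducible branch $C_j$ to replace $GSV_p(\F,C_j)$ by $\tau_p(\F,C_j)-\tau_p(C_j)$: the $\tau_p(C_j)$-terms cancel, the coefficient of $\sum_{i<j}I_p(C_i,C_j)$ becomes $-2+1=-1$, and the term $\Theta$ is untouched, which is precisely the asserted formula.

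The hard part is the identity, and within it the two colength computations: one must use that $\tau_p(\F,C)$ and $\tau_p(C)$ (and their branch versions) are finite so that the additivity applies, and that the classes of $g$ and $h$ are non-zero-divisors modulo $f$ so that multiplication by them is injective — the same coprimality being what guarantees, in \eqref{eq:GSV}, that $g$ and $h$ have only isolated zeros along the branches of $C$. Everything after the identity is formal manipulation of adjunction formulas already recorded in the text.
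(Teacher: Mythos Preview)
Your proof is correct and follows exactly the same route as the paper: reduce to the identity $\tau_p(\F,C)=GSV_p(\F,C)+\tau_p(C)$, then combine it with the adjunction formulas \eqref{eq_4} and \eqref{eq_tjurina}, applying the identity once to $C$ and once to each branch. The only difference is that the paper simply cites this identity from \cite[Proposition~6.2]{FP-GB-SM2021}, whereas you supply a self-contained derivation via the Suwa decomposition and colength additivity.
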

\begin{proof} After \cite[Proposition 6.2]{FP-GB-SM2021} we get $\tau_p(\mathcal{F},C)= GSV_p(\mathcal{F},C)+ \tau_p(C)$. Hence by equalities \eqref{eq_4} and \eqref{eq_tjurina}, we obtain
\begin{eqnarray*}
\tau_p(\mathcal{F},C)&=& GSV_p(\mathcal{F},C)+ \tau_p(C)\\
&=& \sum^{r}_{j=1}GSV_p(\mathcal{F},C_j)-2\sum_{i<j} I_p(C_i,C_j)+\sum^{r}_{j=1}\tau_p(C_j)+\left(\sum_{i<j} I_p(C_i,C_j)\right)+\Theta\\
&=& \sum^{r}_{j=1}\tau_p(\mathcal{F},C_j)-\left(\sum_{i<j} I_p(C_i,C_j)\right)+\Theta.
\end{eqnarray*}
\end{proof}

We illustrate Proposition \ref{prop_tju} with the following example.
\begin{example}\label{eje1}
We borrow from \cite[Example 6.5]{FP-GB-SM2021} the family of foliations $\F_k$, $k\geq 3$,  given by the $1$-form 
\[\omega_k=y\left(2x^{2k-2}+2(\lambda+1)x^2y^{k-2}-y^{k-1}\right)dx+x\left(y^{k-1}-(\lambda+1)x^2y^{k-2}-x^{2k-2}\right)dy,\]
which is a family of dicritical foliations which are not of second type. An effective  balanced divisor of separatrices of $\F_k$ is $\mathcal B=\mathcal B_0: xy=0$. Let $B_1:x=0$ and $B_2:y=0$. Hence $\tau_0(\F_k, \B)=3k-2$, $\tau_0(\F_k, B_1)=k$, and $\tau_0(\F_k,B_2)=2k-1$, since $\Theta=0$ by \eqref{eq_tjurina}, we get
\[\begin{array}{lll}
\tau_0(\mathcal{F}_k,\B) &=& \tau_0(\mathcal{F}_k,B_1)+ \tau_0(\mathcal{F}_K,B_2) - I_0(B_1,B_2)+\Theta\\
&=& k+2k-1-1+0=3k-2.
\end{array}
\]
\end{example}

\section{The GSV-index, the multiplicity and the Tjurina number of a foliation}\label{gsv_mul}
\par In this section, we present some local results involving  the GSV-index, the multiplici\-ty and the Tjurina number of a singular foliation $\F$ along an $\F$-invariant curve.
\begin{proposition}\label{prop_1}
Let $\F$ be a germ of a singular holomorphic foliation at $(\C^2,p)$ and let $C$ be any $\F$-invariant reduced curve. Then 
\begin{eqnarray}\label{eq_5}
GSV_p(\F,C)=\mu_p(\F,C)-\mu_p(C)
\end{eqnarray}
and 
\begin{eqnarray}\label{eq_impor}
\mu_p(\F,C)-\tau_p(\F,C)=\mu_p(C)-\tau_p(C).
\end{eqnarray}
\end{proposition}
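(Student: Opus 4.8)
The plan is to prove the two displayed equalities in turn, deriving the second as an easy consequence of the first together with previously recorded facts. For \eqref{eq_5}, I would first reduce to the irreducible case. If $C=\cup_{j=1}^r C_j$, then on the left-hand side I can apply the adjunction formula \eqref{eq_4} for the GSV-index, while on the right-hand side I apply \eqref{eq_2} for $\mu_p(C)$ and \eqref{eq_3} for $\mu_p(\F,C)$; a short bookkeeping computation shows that the cross-term $\sum_{i<j} I_p(C_i,C_j)$ appears with the correct coefficient ($-2$ on both sides) and the $-r+1$ terms cancel, so that the general identity follows from the identity for each branch $C_j$. Thus it suffices to establish $GSV_p(\F,C_j)=\mu_p(\F,C_j)-\mu_p(C_j)$ for an irreducible invariant curve.

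For an irreducible separatrix $B:f=0$ with primitive parametrization $\gamma(t)=(x(t),y(t))$, I would compute $GSV_p(\F,B)$ directly from the defining integral \eqref{eq:GSV}. Pulling back the Suwa relation $g\omega = h\,df + f\eta$ along $\gamma$, the term $f\circ\gamma$ vanishes identically, so $g(\gamma(t))\,\omega(\gamma(t)) = h(\gamma(t))\,d(f\circ\gamma)(t) = 0$ because $f\circ\gamma \equiv 0$; more usefully, evaluating $g\omega$ on the tangent vector $\gamma'(t)$ and comparing with the expression for $\theta(t)$ from the definition of $\mu_p(\F,B)$, one identifies the order of vanishing of $h/g$ along $\partial B$ in terms of $\ord_t \theta(t) = \mu_p(\F,B)$ and the order of vanishing of $df$ along $\gamma$, the latter being $\mu_p(B) = \ord_t(x'(t),y'(t))\cdot(\text{something}) $— more precisely I would use that $\mu_p(B)$ for an irreducible curve equals $I_p(f_x,f_y)$ and relates to the conductor via $\mu_p(B) = 2\delta_p(B)$, and that the residue computation in \eqref{eq:GSV} reduces to counting zeros and poles of the meromorphic function $h/g$ restricted to the branch. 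The cleanest route is probably to invoke the known fact (from Brunella's book, or \cite{CLS}) that for an irreducible invariant curve $GSV_p(\F,B) = \mu_p(\F,B) - \mu_p(B)$ is essentially the definition of the $GSV$-index rephrased, or to cite the comparison of $GSV$ with the Camacho–Sad-type tangency index; alternatively the referenced \cite{index} presentation already contains this branch-wise formula, and I would simply quote it.

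Once \eqref{eq_5} is in hand, \eqref{eq_impor} is immediate: by \cite[Proposition 6.2]{FP-GB-SM2021} (already used in the proof of Proposition \ref{prop_tju}) we have $\tau_p(\F,C) = GSV_p(\F,C) + \tau_p(C)$, hence
\[
\mu_p(\F,C) - \tau_p(\F,C) = \bigl(GSV_p(\F,C) + \mu_p(C)\bigr) - \bigl(GSV_p(\F,C) + \tau_p(C)\bigr) = \mu_p(C) - \tau_p(C),
\]
which is the claim. The main obstacle is the branch-wise identity \eqref{eq_5} for an irreducible separatrix: making the residue integral in \eqref{eq:GSV} talk to the order $\ord_t\theta(t)$ requires care about which coordinate function of $\gamma$ is non-zero and about the contribution of $df$ pulled back to the branch. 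If a clean self-contained argument proves unwieldy, the fallback is to cite the corresponding statement in \cite{index} or \cite{Brunella-book}, since \eqref{eq_5} is well known for reduced invariant curves; the genuinely new content of the proposition is then the reformulation \eqref{eq_impor}, which follows formally as above.
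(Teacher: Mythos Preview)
Your proposal is correct and follows essentially the same approach as the paper: reduce \eqref{eq_5} to the irreducible case via the adjunction formulas \eqref{eq_2}, \eqref{eq_3}, \eqref{eq_4}, invoke the branch-wise identity as a known fact (the paper cites \cite[Eq.~5]{FP-GB-SM2024} for this rather than attempting the residue computation you sketch), and then derive \eqref{eq_impor} from \eqref{eq_5} together with $\tau_p(\F,C)=GSV_p(\F,C)+\tau_p(C)$ from \cite[Proposition~6.2]{FP-GB-SM2021}. Your tentative direct argument for the irreducible case is not needed and is somewhat garbled as written; the paper simply cites the earlier reference, which is also your stated fallback.
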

\begin{proof}
Let $C=\cup_{j=1}^{r}C_j$. It follows from \cite[Eq. 5]{FP-GB-SM2024} that 
\begin{eqnarray}\label{eq_1}
GSV_p(\F,C_j)&=&\mu_p(\F,C_j)-\mu_p(C_j)\quad\text{ for all}\quad j=1,\ldots, r.
\end{eqnarray}
By combining \eqref{eq_1}, \eqref{eq_2}, \eqref{eq_3}, and \eqref{eq_4}, we obtain
$GSV_p(\F,C)=\mu_p(\F,C)-\mu_p(C)$. 
Finally, using \cite[Proposition 6.2]{FP-GB-SM2021}, we get
\[\mu_p(\F,C)-\tau_p(\F,C)=\mu_p(C)-\tau_p(C).\]
\end{proof}
After Proposition \ref{prop_1} and \cite[Theorem 3.2]{Almiron}, we prove Corollary \ref{coro-q} which provides a positive answer to Question 1 posed in \cite{FP-GB-SM2024}.
\begin{corollary}\label{coro-q}
Let $\F$ be a germ of a singular holomorphic foliation at $(\C^2,p)$ and let $C$ be any $\F$-invariant reduced curve. Then
\[GSV_p(\F,C)<4\tau_p(\F,C)-3\mu_p(\F,C).\]
\end{corollary}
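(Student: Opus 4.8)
The plan is to reduce the desired inequality to a known bound on classical curve invariants. By Proposition \ref{prop_1}, equation \eqref{eq_5}, we have $GSV_p(\F,C)=\mu_p(\F,C)-\mu_p(C)$, and by equation \eqref{eq_impor}, $\mu_p(\F,C)-\tau_p(\F,C)=\mu_p(C)-\tau_p(C)$. From the second identity we get $\tau_p(\F,C)=\mu_p(\F,C)-\mu_p(C)+\tau_p(C)$. Hence
\[
4\tau_p(\F,C)-3\mu_p(\F,C)=4\bigl(\mu_p(\F,C)-\mu_p(C)+\tau_p(C)\bigr)-3\mu_p(\F,C)=\mu_p(\F,C)-4\mu_p(C)+4\tau_p(C).
\]
On the other hand, $GSV_p(\F,C)=\mu_p(\F,C)-\mu_p(C)$. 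So the claimed inequality $GSV_p(\F,C)<4\tau_p(\F,C)-3\mu_p(\F,C)$ is equivalent to
\[
\mu_p(\F,C)-\mu_p(C)<\mu_p(\F,C)-4\mu_p(C)+4\tau_p(C),
\]
that is, to $3\mu_p(C)<4\tau_p(C)$, or $\dfrac{\mu_p(C)}{\tau_p(C)}<\dfrac{4}{3}$.

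Thus the first step is exactly this algebraic manipulation, cancelling the common term $\mu_p(\F,C)$ on both sides. The second and final step is to invoke \cite[Theorem 3.2]{Almiron}, which (as signalled in the text just before the corollary) gives precisely the Dimca--Greuel-type bound $\mu_p(C)<\frac{4}{3}\tau_p(C)$ for a reduced plane curve singularity $C$. One should note that if $C$ is smooth then $\mu_p(C)=\tau_p(C)=0$ and the inequality $GSV_p(\F,C)<4\tau_p(\F,C)-3\mu_p(\F,C)$ reads $0<0$, which fails; so implicitly $C$ is assumed singular at $p$, consistent with $\mu_p(C)>0$, and then $\tau_p(C)>0$ as well, so the strict inequality $3\mu_p(C)<4\tau_p(C)$ is meaningful and is what \cite{Almiron} provides.

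There is essentially no analytic obstacle here: everything has been pre-packaged. The only point requiring a little care is bookkeeping — making sure the substitution for $\tau_p(\F,C)$ is done correctly and that the strict inequality is preserved (it is, since we merely add and subtract the same quantities). The ``main obstacle'', such as it is, is purely a matter of citing the right external input in the right form; the Dimca--Greuel bound for reduced plane curves due to Alc\'antara--Almir\'on (or whichever attribution \cite{Almiron} carries) is the substantive ingredient, and once it is in hand the corollary is immediate. I would therefore write the proof in two lines: first rewrite $4\tau_p(\F,C)-3\mu_p(\F,C)$ using \eqref{eq_5} and \eqref{eq_impor} as $GSV_p(\F,C)+\bigl(4\tau_p(C)-3\mu_p(C)\bigr)$, and then observe that $4\tau_p(C)-3\mu_p(C)>0$ by \cite[Theorem 3.2]{Almiron}.
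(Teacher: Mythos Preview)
Your proof is correct and follows essentially the same route as the paper: both combine equations \eqref{eq_5} and \eqref{eq_impor} from Proposition \ref{prop_1} with Almir\'on's bound $\mu_p(C)-\tau_p(C)<\mu_p(C)/4$ (equivalently $3\mu_p(C)<4\tau_p(C)$), the only difference being that you spell out the algebra explicitly while the paper compresses it. Your caveat about the smooth case is a valid observation that the paper does not address (note, however, that the inequality then reads $\mu_p(\F,C)<\mu_p(\F,C)$ rather than $0<0$, since $\mu_p(\F,C)\geq 1$ when $p$ is singular for $\F$).
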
 
\begin{proof}
From Proposition \ref{prop_1}, equation \eqref{eq_impor}, and \cite[Theorem 3.2]{Almiron}, we obtain
\begin{equation*}\label{eq_r}
\mu_p(\F,C)-\tau_p(\F,C)=\mu_p(C)-\tau_p(C)<\mu_p(C)/4.
\end{equation*}
and proof follows from formula \eqref{eq_5}.
\end{proof}
\begin{corollary}
 Let $\F$ be a germ of a singular holomorphic foliation at $(\C^2,p)$ and let $C$ be a $\F$-invariant reduced curve. Then $\mu_p(\F,C)=\tau_p(\F,C)$   if and only if  after a suitable holomorphic
coordinate transformation $C$ is quasi-homogeneous.
\end{corollary}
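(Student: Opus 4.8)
The plan is to reduce the foliation-theoretic statement to the classical characterization of quasi-homogeneity for plane curve singularities, namely the theorem of K. Saito that a reduced plane curve singularity $C: f=0$ satisfies $\mu_p(C)=\tau_p(C)$ if and only if, after a holomorphic change of coordinates, $f$ is a quasi-homogeneous polynomial. So the first step is to invoke Proposition \ref{prop_1}, specifically equation \eqref{eq_impor}, which gives
\[
\mu_p(\F,C)-\tau_p(\F,C)=\mu_p(C)-\tau_p(C).
\]
From this identity it is immediate that $\mu_p(\F,C)=\tau_p(\F,C)$ holds if and only if $\mu_p(C)=\tau_p(C)$ holds, with no reference to $\F$ whatsoever beyond the fact that $C$ is $\F$-invariant (which is needed for the Tjurina number of the foliation to be defined).

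Next I would apply Saito's theorem to the equivalence $\mu_p(C)=\tau_p(C)$: this holds precisely when $C$ is, up to a holomorphic coordinate change, quasi-homogeneous. Chaining the two equivalences yields the claim. I would state Saito's theorem as the external input, with a citation to \cite{Saito} (K. Saito, \emph{Quasihomogene isolierte Singularit\"aten von Hyperfl\"achen}, Invent. Math. 14 (1971)), since the paper does not appear to have introduced it yet.

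The only real subtlety — and the single point worth a sentence of care in the write-up — is that Saito's result is classically phrased for isolated hypersurface singularities, i.e. for $f$ with an isolated critical point at $p$, which for a plane curve means $f$ reduced (equivalently $\mu_p(C)<\infty$). Since $C$ is assumed to be a reduced curve here, this hypothesis is satisfied and $\mu_p(C)$, $\tau_p(C)$ are both finite, so the two numbers being equal makes sense and Saito's dichotomy applies verbatim. There is genuinely no hard step: the content was already extracted in Proposition \ref{prop_1}, and the corollary is just the composition of that identity with a known curve-singularity theorem. I would therefore keep the proof to two or three lines.

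\begin{proof}
By Proposition \ref{prop_1}, equation \eqref{eq_impor}, we have $\mu_p(\F,C)-\tau_p(\F,C)=\mu_p(C)-\tau_p(C)$. Hence $\mu_p(\F,C)=\tau_p(\F,C)$ if and only if $\mu_p(C)=\tau_p(C)$. Since $C$ is reduced, $\mu_p(C)$ and $\tau_p(C)$ are finite, and by Saito's theorem \cite{Saito} the equality $\mu_p(C)=\tau_p(C)$ holds if and only if $C$ is quasi-homogeneous after a suitable holomorphic coordinate transformation. This proves the claim.
\end{proof}
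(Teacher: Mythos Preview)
Your proof is correct and follows essentially the same approach as the paper: invoke Proposition \ref{prop_1} (specifically \eqref{eq_impor}) to reduce $\mu_p(\F,C)=\tau_p(\F,C)$ to $\mu_p(C)=\tau_p(C)$, and then apply Saito's theorem \cite{Saito}. The paper's own proof is a one-line citation of these two ingredients, so your write-up is if anything slightly more detailed.
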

\begin{proof} 
It follows from Proposition \ref{prop_1}, and \cite[Satz p.123, (a) and (d)]{Saito}.
\end{proof}
\begin{corollary}
Let $\F$ be a germ of a singular holomorphic foliation at $(\C^2,p)$ and let $C=\cup_{j=1}^{r}C_j$ be a $\F$-invariant reduced curve. Then 
\[\sum_{i=1}^{r}\left(\mu_p(\F,C_i)-\tau_p(\F,C_i)\right)\leq \mu_p(\F,C)-\tau_p(\F,C).\]
\end{corollary}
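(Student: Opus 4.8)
The plan is to transfer the inequality to a statement about the reduced curve $C$ alone, and then to the adjunction formulas already recorded. Since each branch $C_i$ is itself a reduced $\F$-invariant curve, Proposition \ref{prop_1} applies to $C$ and to each $C_i$: by \eqref{eq_impor},
\[\mu_p(\F,C)-\tau_p(\F,C)=\mu_p(C)-\tau_p(C),\qquad \mu_p(\F,C_i)-\tau_p(\F,C_i)=\mu_p(C_i)-\tau_p(C_i)\quad(1\le i\le r).\]
Subtracting, the assertion becomes equivalent to the purely curve-theoretic inequality $\sum_{i=1}^{r}\bigl(\mu_p(C_i)-\tau_p(C_i)\bigr)\le\mu_p(C)-\tau_p(C)$; in other words, the ``quasi-homogeneity defect'' $\mu_p-\tau_p$ is superadditive over the branches of $C$.

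Next I would feed the adjunction formulas \eqref{eq_2} and \eqref{eq_tjurina} into this inequality. Using $\mu_p(C)-\sum_j\mu_p(C_j)=2\sum_{i<j}I_p(C_i,C_j)-(r-1)$ and $\tau_p(C)-\sum_j\tau_p(C_j)=\sum_{i<j}I_p(C_i,C_j)+\Theta$, everything cancels except the single numerical inequality
\[\Theta\ \le\ \Bigl(\sum_{1\le i<j\le r}I_p(C_i,C_j)\Bigr)-(r-1)\]
for the Kähler-differential correction term $\Theta$ of \eqref{eq_tjurina} (equivalently, of Proposition \ref{prop_tju}); this reads $\tau_p(C)\le\sum_j\tau_p(C_j)+2\sum_{i<j}I_p(C_i,C_j)-(r-1)$, or, in foliation language via Proposition \ref{prop_tju} and \eqref{eq_3}, $\tau_p(\F,C)\le\sum_j\tau_p(\F,C_j)-(r-1)$. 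It is enough to treat $r=2$, i.e. to prove $\mu_p(C_1\cup C_2)-\tau_p(C_1\cup C_2)\ge\bigl(\mu_p(C_1)-\tau_p(C_1)\bigr)+\bigl(\mu_p(C_2)-\tau_p(C_2)\bigr)$ for reduced curve germs $C_1,C_2$ without common component: the general case then follows by peeling off one branch at a time, using $I_p(C_1,C_2\cup\cdots\cup C_r)=\sum_{j\ge2}I_p(C_1,C_j)$ and the same adjunction bookkeeping.

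The hard part — and the only non-formal one — is exactly this two-block estimate (equivalently, the bound on $\Theta$). It does not follow from $\tau_p\le\mu_p$, which once combined with \eqref{eq_2} points the wrong way; and it is genuinely not a statement about abstract Artinian algebras: writing $A=\mathcal{O}_{\C^2,p}/(P,Q)$ (an Artinian Gorenstein algebra, since $(P,Q)$ is a complete intersection), so that $\tau_p(\F,D)=\dim_\C A/(\bar f_D)$ for a local equation $f_D$ of an invariant curve $D$, the naive inequality $\dim_\C A/(\bar f_1\bar f_2)\le\dim_\C A/(\bar f_1)+\dim_\C A/(\bar f_2)-1$ can fail even for $f_1,f_2$ coprime in $\mathcal{O}_{\C^2,p}$ (take $A=\C\{x,y\}/(y,x^2)$, $f_1=x$, $f_2=x+y$: then $\dim_\C A/(\bar f_1)=\dim_\C A/(\bar f_2)=1$ but $\dim_\C A/(\bar f_1\bar f_2)=\dim_\C A=2$). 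The point is that $x$ and $x+y$ are not simultaneously separatrices of a foliation with $(P,Q)=(y,x^2)$; so the proof must really use the invariance of $C_1,C_2$ (which forces each $\bar f_{C_i}$ to be a zero divisor of a controlled type in $A$), or else import a curve-theoretic input directly. I would obtain the bound on $\Theta$ from the explicit expression for $\Theta$ in terms of values of the module of Kähler differentials of $C$ given in \cite[Theorem 5.1]{H-H}, checking that it never exceeds $\sum_{i<j}I_p(C_i,C_j)-(r-1)$; alternatively, one can interpret $\mu_p(D)-\tau_p(D)$ as the $\C$-dimension of the torsion submodule of $\Omega^1_{\mathcal{O}_D}$ and compare $\Omega^1_{\mathcal{O}_C}$ with $\Omega^1_{\mathcal{O}_{C_1}}$ and $\Omega^1_{\mathcal{O}_{C_2}}$ via the restriction surjections, or use upper semicontinuity of the Tjurina number along a one-parameter degeneration. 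As a sanity check, the elementary quasi-homogeneous cases — an ordinary node, an $A_{2k+1}$-singularity, an ordinary $r$-fold point — all give equality, consistent with both sides of the original inequality vanishing there.
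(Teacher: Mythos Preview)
Your reduction step is exactly what the paper does: apply Proposition~\ref{prop_1} to $C$ and to each branch $C_i$ to turn the assertion into the purely curve-theoretic inequality
\[
\sum_{i=1}^{r}\bigl(\mu_p(C_i)-\tau_p(C_i)\bigr)\le \mu_p(C)-\tau_p(C).
\]
Where you diverge from the paper is in what you do next. You correctly flag this inequality as the ``hard part'' and then launch into an attempt to prove it---rewriting it as a bound on $\Theta$, reducing to two blocks, discussing torsion of $\Omega^1_{\mathcal{O}_C}$, semicontinuity, and so on---without actually completing any of these routes. The paper, by contrast, simply \emph{cites} this inequality: it is \cite[Theorem 4.3]{H-H}. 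So your proof is not wrong in spirit, but it is incomplete precisely at the step where a one-line citation closes the argument.

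Two concrete remarks. First, your reformulation $\Theta\le\sum_{i<j}I_p(C_i,C_j)-(r-1)$ is equivalent to the needed inequality, and your instinct to look at \cite{H-H} is right---but the relevant result there is Theorem~4.3, not only the explicit formula for $\Theta$ in Theorem~5.1. Second, your counterexample with $A=\C\{x,y\}/(y,x^2)$ is a good observation that the inequality is not a formal fact about Artinian algebras; but since the curve-theoretic inequality is already available in the literature, you do not need to exploit the invariance of the $C_i$ directly. Replace your entire second and third paragraphs by a citation of \cite[Theorem 4.3]{H-H} and the proof is complete and identical to the paper's.
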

\begin{proof}
By \cite[Theorem 4.3]{H-H}, we have
\begin{equation}\label{ecc3}
\sum_{i=1}^{\ell}\left(\mu_p(C_i)-\tau_p(C_i)\right)\leq \mu_p(C)-\tau_p(C).
\end{equation}
Applying Proposition \ref{prop_1} to each $C_i$ and to $C$, and substituting into  \eqref{ecc3}, we obtain the desired inequality.
\end{proof}
\subsection{Values set of fractional ideals and the Tjurina number of a foliation}
In this subsection, we follow the notations from \cite{A-H}. Let $C:f(x,y)=0$ be a (reduced) germ of complex plane curve singularity at $p\in\C^2$ with equation $f=\Pi_{i=1}^{r}f_i=0$, where $f_i\in\C\{x,y\}$ is irreducible. Each $f_i$ defines a branch $C_i$ and its analytic type is characterized by the local ring $\mathcal{O}_i:=\C\{x,y\}/(f_i)$ up to $\C$-algebra isomorphism. The field of fractions $\mathcal{H}_i$ of $\mathcal{O}_i$ is isomorphic to $\C(t_i)$ and associated to it we have a canonical discrete valuation $\nu_i:\mathcal{H}_i\to\overline{\mathbb{Z}}:=\mathbb{Z}\cup\{\infty\}$. If $\overline{h}\in\mathcal{O}_i$ with $h\in\C\{x,y\}$, then we have $\nu_i(\overline{h})=I_p(f_i,h)$. The image of $\nu_i$ of $\mathcal{O}_i\setminus\{0\}$, that is
\begin{equation*}
S(C_i):=\{\nu_i(\overline{h})\in\mathbb{N}:\overline{h}\in\mathcal{O}_i,\,\,\,\overline{h}\neq 0\}
\end{equation*}
is the semigroup of values of $C_i$. Let  $\mathcal{O}:=\C\{x,y\}/(f)$. The semigroup of values  of $C$ is
\[
S(C):=\{(\nu_1(h),\ldots, \nu_r(h))\;:\; h\in \mathcal{O},\hbox{\rm $h$ is not a zero divisor}\}\subseteq \mathbb N^r.
\]
\par Let $\Omega^1=\C\{x,y\}dx+\C\{x,y\}dy$ be the $\C\{x,y\}$-module of holomorphic forms on $\C^2$ and consider the submodule $\F(f):=\C\{x,y\}df+f\Omega^1$. The module of K\"ahler differentials of $C$ is $\Omega_f:=\frac{\Omega^1}{\F(f)}$, that is the $\mathcal{O}$-module $\mathcal{O}dx+\mathcal{O}dy$ module the relation $df=0$. 
\par If $\varphi_i=(x_i,y_i)\in\C\{t_i\}\times\C\{t_i\}$ is a parametrization (non necessarily a Newton-Puiseux parametrization) of the branch $C_i$ and $h(x,y)\in\mathcal{O}$ then we denote $\varphi^{*}_i(h):=h(x_i,y_i)\in\C\{t_i\}$. In addition, given $\omega=A(x,y)dx+B(x,y) dy\in\Omega_f$, we set 
\[\varphi^{*}_i(\omega):=t_i\cdot (A(\varphi_i)\cdot x_i'+B(\varphi_i)\cdot y_i')\in\C\{t_i\},\]
where $x'_i,y_i'$ denote, respectively, the derivate of $x_i,y_i\in\C\{t_i\}$ with respect to $t_i$. Let $\mathcal{H}=\prod_{i=1}^r \mathcal{H}_i$ that is, the total ring of fractions of $\mathcal{O}$ and 
\[\varphi^{*}(\Omega_f)=\{(\varphi_1^{*}(\omega),\ldots,\varphi_r^{*}(\omega)):\omega\in\Omega_f\}\subset\mathcal{H}.\]
By \cite[Theorem 3.1]{Bayer}
\[\varphi^{*}(\Omega_f)\simeq\frac{\Omega_f}{Tor(\Omega_f)}\]
where $Tor(\Omega_f)$ is the torsion submodule of $\Omega_f$. In this way, $\varphi^{*}(\Omega_f)$ is a fractional ideal of $\mathcal{O}$ and, considering $\nu_i(\omega):=\nu_i(\varphi^{*}_i(\omega))$, its values set is given by 
\[\varLambda_f=\underline{\nu}(\varphi^{*}(\Omega_f))=\{\underline{\nu}(\omega):=(\nu_1(\omega),\ldots,\nu_r(\omega)):\omega\in\Omega_f\}.\]
We state the following theorem of \cite[Theorem 2.9]{A-H}.
\begin{theorem}
Let $C$ be a germ of reduced plane curve at $p\in\C^2$. With the previous notation, let $\overline{\varLambda}=\varLambda_f\cup \{0\}$. Then, we have
\[\mu_p(C)-\tau_p(C)=d(\overline{\varLambda}\setminus S(C)),\]
where $d$ denotes the distance function defined in \cite[Section 2.1]{A-H}.
\end{theorem}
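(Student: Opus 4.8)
The plan is to split the proof into a combinatorial ingredient — about the distance function $d$ on value sets of fractional ideals of $\mathcal{O}:=\C\{x,y\}/(f)$ — and a homological ingredient that identifies $\mu_p(C)-\tau_p(C)$ with a $\C$-length. On the combinatorial side one uses the \emph{length--distance property} of $d$: since the value semigroup $S(C)$ of a plane curve is good, for any fractional $\mathcal{O}$-ideals $\mathcal{I}\subseteq\mathcal{J}$ one has $\dim_{\C}(\mathcal{J}/\mathcal{I})=d\bigl(\underline{\nu}(\mathcal{J})\setminus\underline{\nu}(\mathcal{I})\bigr)$, where $\underline{\nu}$ denotes the value set. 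By definition $S(C)=\underline{\nu}(\mathcal{O})$, and by Bayer's isomorphism $\varphi^{*}(\Omega_f)\simeq\Omega_f/\mathrm{Tor}(\Omega_f)$ recalled above, $\varphi^{*}(\Omega_f)$ is a fractional ideal with $\varLambda_f=\underline{\nu}(\varphi^{*}(\Omega_f))$.

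The bridge between the two ingredients is as follows. First I would check that $S(C)\setminus\{\underline{0}\}\subseteq\varLambda_f$: if $h\in\mathcal{O}$ is a nonzerodivisor with $\underline{\nu}(h)\neq\underline{0}$, then $h$ lies in the maximal ideal, so each $\varphi^{*}_i(h)$ has order $n_i\ge 1$; since $\varphi^{*}_i(dh)=t_i\cdot\bigl(\varphi^{*}_i(h)\bigr)'$, in characteristic zero the extra factor $t_i$ exactly compensates the order drop caused by the derivative, whence $\underline{\nu}(\varphi^{*}(dh))=\underline{\nu}(h)\in\varLambda_f$. Since moreover $\underline{0}\notin\varLambda_f$ (each $\varphi^{*}_i$ carries a factor $t_i$), it follows that $\overline{\varLambda}=\varLambda_f\cup\{\underline{0}\}$ is the value set of the fractional ideal $F:=\mathcal{O}+\varphi^{*}(\Omega_f)\supseteq\mathcal{O}$. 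Granting the key identity $\dim_{\C}(F/\mathcal{O})=\mu_p(C)-\tau_p(C)$, applying the length--distance property to $\mathcal{O}\subseteq F$ gives precisely $\mu_p(C)-\tau_p(C)=d(\overline{\varLambda}\setminus S(C))$.

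Hence the crux is the identity $\dim_{\C}(F/\mathcal{O})=\mu_p(C)-\tau_p(C)$, which I would establish by combining three facts. (i) The Jacobian map $A\,dx+B\,dy\mapsto Af_y-Bf_x$ descends to a morphism $\Omega_f\to\mathcal{O}$ whose kernel is $\mathrm{Tor}(\Omega_f)$ and whose image is the ideal $(f_x,f_y)\mathcal{O}$, so that $\Omega_f/\mathrm{Tor}(\Omega_f)\simeq(f_x,f_y)\mathcal{O}$; comparing with $\Omega_f/\mathrm{Tor}(\Omega_f)\simeq\varphi^{*}(\Omega_f)$ produces a nonzerodivisor $w\in\mathcal{H}$ with $\varphi^{*}(\Omega_f)=w\cdot(f_x,f_y)\mathcal{O}$. (ii) Elementary bookkeeping with colengths, using $\dim_{\C}\mathcal{O}/(f_x,f_y)\mathcal{O}=\tau_p(C)$ and translation invariance under multiplication by $w$, yields $\dim_{\C}(F/\mathcal{O})=\dim_{\C}\bigl(\mathcal{O}/(\varphi^{*}(\Omega_f)\cap\mathcal{O})\bigr)-\tau_p(C)-\deg(w)$, where $\deg(w)$ is the total valuation of $w$. (iii) The Milnor-number identity $\dim_{\C}\bigl(\mathcal{O}/(\varphi^{*}(\Omega_f)\cap\mathcal{O})\bigr)=\mu_p(C)+\deg(w)$; for an irreducible $C$ this is the classical expression of the Milnor number through a parametrization of its normalization, and for $C=\bigcup_{j=1}^{r}C_j$ one descends to the branches via the adjunction formulas \eqref{eq_2}, \eqref{eq_tjurina}, \eqref{eq_3}, \eqref{eq_4} and the additivity of $d$ along the decomposition. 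I expect (iii) to be the main obstacle in the reducible case, where aligning the cross terms $\sum_{i<j}I_p(C_i,C_j)$ and the constant $\Theta$ on both sides is delicate; one must also verify that the value set of $F=\mathcal{O}+\varphi^{*}(\Omega_f)$ is exactly $\overline{\varLambda}$ and not larger, which holds because $S(C)$ and $\varLambda_f$ are both closed under componentwise minimum.
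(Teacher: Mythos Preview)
The paper does not prove this statement. It is quoted as \cite[Theorem 2.9]{A-H} (Almir\'on--Hernandes), introduced by the sentence ``We state the following theorem of \cite[Theorem 2.9]{A-H}'', and then used only as input for the subsequent corollary via Proposition~\ref{prop_1}. There is thus no in-paper proof to compare your proposal against; the result is imported wholesale from \cite{A-H}.

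Regarding the proposal itself: the overall architecture---apply the length--distance formula to the inclusion $\mathcal{O}\subseteq F$ with $F=\mathcal{O}+\varphi^{*}(\Omega_f)$, then identify $\dim_{\C}(F/\mathcal{O})$ with $\mu_p(C)-\tau_p(C)$---is reasonable and is indeed in the spirit of \cite{A-H}. Two points, however, are not settled. First, your claim that $\underline{\nu}(F)=\overline{\varLambda}$ exactly is not justified by ``$S(C)$ and $\varLambda_f$ are both closed under componentwise minimum'': that property does not rule out \emph{new} values produced by cancellations $a+b$ with $a\in\mathcal{O}$, $b\in\varphi^{*}(\Omega_f)$ sharing a leading term on some branch; one needs a further argument (or to avoid the issue by computing $d$ directly from $\varLambda_f$ and $S(C)$ as good semigroup ideals). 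Second, the chain (i)--(iii) is more convoluted than necessary: the auxiliary nonzerodivisor $w$ and the compensating $\deg(w)$ are artifacts of routing through $(f_x,f_y)\mathcal{O}$, and step~(iii) in the reducible case is asserted rather than proved---the adjunction formulas \eqref{eq_2}--\eqref{eq_4} you invoke concern $\mu_p$, $\tau_p$, $\mu_p(\F,\cdot)$ and $GSV_p$, not the colength $\dim_{\C}\bigl(\mathcal{O}/(\varphi^{*}(\Omega_f)\cap\mathcal{O})\bigr)$, so the promised ``descent to the branches'' is not actually available from them. If you want a self-contained argument you should consult \cite{A-H} directly.
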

In the context of foliations, Proposition \ref{prop_1} implies the following corollary.
\begin{corollary}
Let $\F$ be a germ of a singular holomorphic foliation at $(\C^2,p)$ and let $C:=f(x,y)=0$ be a $\F$-invariant reduced plane curve.  Then 
\[\mu_p(\F,C)-\tau_p(\F,C)=d(\overline{\varLambda}\setminus S(C)),\]
where $\overline{\varLambda}=\varLambda_f\cup \{0\}$.
\end{corollary}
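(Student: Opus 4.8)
The plan is to chain together two identities that have already been established in the excerpt. First, I would invoke Proposition~\ref{prop_1}, specifically equation~\eqref{eq_impor}, which tells us that
\[
\mu_p(\F,C)-\tau_p(\F,C)=\mu_p(C)-\tau_p(C),
\]
and this holds for \emph{any} $\F$-invariant reduced curve $C$, so in particular for the $\F$-invariant reduced plane curve $C:f(x,y)=0$ in the statement. The point of Proposition~\ref{prop_1} is precisely that the left-hand quantity, which a priori depends on the foliation, actually coincides with the purely curve-theoretic quantity $\mu_p(C)-\tau_p(C)$.

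Next I would apply the theorem of Almir\'on--Hern\'andes (\cite[Theorem 2.9]{A-H}, stated just above the corollary), which identifies $\mu_p(C)-\tau_p(C)$ with the distance $d(\overline{\varLambda}\setminus S(C))$, where $\overline{\varLambda}=\varLambda_f\cup\{0\}$ is built from the values set of the fractional ideal $\varphi^{*}(\Omega_f)$ attached to the module of K\"ahler differentials $\Omega_f$. Since the curve $C$ in the corollary is reduced and $f\in\C\{x,y\}$ (it is $\F$-invariant and analytic here), this theorem applies verbatim. Concatenating the two equalities gives
\[
\mu_p(\F,C)-\tau_p(\F,C)=\mu_p(C)-\tau_p(C)=d(\overline{\varLambda}\setminus S(C)),
\]
which is exactly the asserted formula.

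There is essentially no obstacle: the corollary is a direct transport of a known curve invariant to the foliated setting via Proposition~\ref{prop_1}. The only thing worth a moment's care is making sure the hypotheses of \cite[Theorem 2.9]{A-H} are met --- that $C$ is reduced (given) and that we are using the same object $\overline{\varLambda}$ --- and that the objects $\mu_p(C)$, $\tau_p(C)$ appearing in \eqref{eq_impor} are the classical Milnor and Tjurina numbers of the curve, which they are by definition. So the proof is a two-line citation argument.

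\begin{proof}
By Proposition~\ref{prop_1}, equation~\eqref{eq_impor}, applied to the $\F$-invariant reduced curve $C$, we have
\[
\mu_p(\F,C)-\tau_p(\F,C)=\mu_p(C)-\tau_p(C).
\]
On the other hand, since $C$ is a reduced germ of plane curve, \cite[Theorem 2.9]{A-H} gives
\[
\mu_p(C)-\tau_p(C)=d(\overline{\varLambda}\setminus S(C)),
\]
with $\overline{\varLambda}=\varLambda_f\cup\{0\}$. Combining the two equalities yields
\[
\mu_p(\F,C)-\tau_p(\F,C)=d(\overline{\varLambda}\setminus S(C)).
\]
\end{proof}
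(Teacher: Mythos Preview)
Your proof is correct and is exactly the argument the paper intends: it states the corollary immediately after the Almir\'on--Hernandes theorem with the remark that ``Proposition~\ref{prop_1} implies the following corollary,'' i.e., one combines \eqref{eq_impor} with \cite[Theorem 2.9]{A-H}.
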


\section{Multiplicity of a foliation along a divisor of separatrices}\label{multi} 
Following \cite[Section 2]{FP-GB-SM2024}, we define the multiplicity of  $\mathcal{F}$ along any non-empty divisor 
$\mathcal{B}=\sum_{B} a_B\cdot B$ of separatrices of $\mathcal F$ at $p$ as follows:
\begin{equation}\label{eq:gmul}
\mu_{p}(\mathcal{F},\mathcal{B})=\left(\displaystyle\sum_{B}a_{B}\cdot \mu_{p}(\mathcal{F},B)\right)-\deg(\mathcal{B})+1.
\end{equation}
The notion of balanced divisor of separatrices of a foliation was introduced by Genzmer in \cite{Genzmer}, see also \cite{Genzmer-Mol}.
By convention we set $\mu_{p}(\mathcal{F},\mathcal{B})=1$ for any empty divisor $\mathcal{B}$.
In particular, when $\mathcal{B}=B_1+\cdots+B_r$ is an \textit{effective primitive divisor of separatrices} of the singular foliation $\mathcal{F}$ at $p$, we recover \cite[Equation (2.2), p. 329]{K-S} (for the reduced plane curve $C:=\cup_{i=1}^rB_i$).
 Hence, if we write $\mathcal B=\mathcal B_0-\mathcal B_{\infty}$ where $\mathcal B_0$ and $\mathcal B_{\infty}$ are effective divisors we obtain
\begin{equation}\label{mudivisor}
\mu_{p}(\mathcal{F},\mathcal{B})=\mu_{p}(\mathcal{F},\mathcal{B}_0)-\mu_{p}(\mathcal{F},\mathcal{B}_{\infty})+1.
\end{equation}
\noindent Since $\mu(\mathcal F, \mathcal B_{\infty})\geq 1$, it follows that
$\mu_{p}(\mathcal{F},\mathcal{B}_0)\geq \mu_{p}(\mathcal{F},\mathcal{B}).$ 

\par In \cite[Section 3]{FP-GB-SM2021} the $\chi$-number of the foliation $\F$ at $p$  was introduced as $\chi_p (\mathcal{F})=\left(\sum_{q \in \mathcal I_p(\mathcal{F})}\nu_{q}(\mathcal{F})\xi_q(\F)\right)-\xi_p(\F)$, where $\mathcal I_p(\mathcal{F})$ denotes the set of infinitely near points of $\F$ at $p$ (see \cite[p. 7]{FP-GB-SM2021}), $\nu_{q}(\mathcal{F})$ is the algebraic multiplicity of the strict transform of $\mathcal{F}$ passing by $q$ and $\xi_p(\F)$ is the tangency excess of the foliation $\mathcal F$ (see \cite[Definition 2.3]{FP-GB-SM2021}).
By \cite[Proposition 4.7]{FP-GB-SM2021} we get 
    \begin{equation} \label{eq:IaMC}
    \mu_{p}(\mathcal{F},\mathcal B)=\mu_{p}(\mathcal F)-\chi_p(\mathcal{F}).
    \end{equation}
    Since  $\chi_p(\mathcal{F})$ is a non-negative number (see \cite[Proposition 3.1]{FP-GB-SM2021} ) we have
    \begin{equation}\label{eq:IaMineq}
\mu_{p}(\mathcal F)\geq \mu_{p}(\mathcal{F},\mathcal B).
    \end{equation}
Observe that, if $\mathcal B$ is  a primitive balanced divisor of separatrices for $\mathcal{F}$ at $p$ and $\mu_{p}(\mathcal{F},\mathcal B)=\mu_{p}(\mathcal F)$ then, by \cite[Proposition 3.1]{FP-GB-SM2021}),
$\mathcal F$ is of second type or has algebraic multiplicity equal to one. Moreover, if $\mathcal F$ is of second type and $\mathcal B$ is  a primitive balanced divisor of separatrices of $\mathcal{F}$ then $\mu_{p}(\mathcal{F},\mathcal B)=\mu_{p}(\mathcal F)$.
\par The following proposition generalizes \cite[Corollary B]{FP-GB-SM2021}.
\begin{proposition}\label{iguales}
Let $\F$ be a germ of a singular foliation  at $(\C^2,p)$. Let $\B=\B_0-\B_{\infty}$ be a primitive balanced divisor of separatrices for $\F$ at $p$.  Then,
 \begin{equation*}
 \mu_p(\F)-\tau_p(\F,\B_0)=\mu_p(\B_0)-\tau_p(\B_0)-\mu_p(\F,\B_{\infty})+\chi_p(\F)+1.
 \end{equation*}
 Moreover, $\mu_p(\F)=\tau_p(\F,\B_0)$ if and only if  \[\mu_p(\B_0)-\tau_p(\B_0)=\mu_p(\F,\B_{\infty})-\chi_p(\F)-1.\]
In particular, if $\B$ is an effective divisor then $\mu_p(\F)=\tau_p(\F,\B_0)$ if and only if  $\mu_p(\B_0)=\tau_p(\B_0)$ and $\chi_p(\F)=0$.
\end{proposition}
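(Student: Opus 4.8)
The strategy is to reduce everything to the two identities already assembled in the excerpt: the multiplicity decomposition \eqref{mudivisor} and Proposition \ref{prop_1} applied to the effective reduced curve underlying $\mathcal{B}_0$. First I would write $\mu_p(\mathcal{F},\mathcal{B}) = \mu_p(\mathcal{F},\mathcal{B}_0) - \mu_p(\mathcal{F},\mathcal{B}_\infty) + 1$ from \eqref{mudivisor}, and substitute $\mu_p(\mathcal{F},\mathcal{B}) = \mu_p(\mathcal{F}) - \chi_p(\mathcal{F})$ from \eqref{eq:IaMC}. This yields $\mu_p(\mathcal{F},\mathcal{B}_0) = \mu_p(\mathcal{F}) - \chi_p(\mathcal{F}) + \mu_p(\mathcal{F},\mathcal{B}_\infty) - 1$. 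Next, since $\mathcal{B}_0$ is an effective primitive divisor of separatrices, the curve $C_0 := \mathrm{supp}(\mathcal{B}_0)$ is an $\mathcal{F}$-invariant reduced curve, so Proposition \ref{prop_1}, equation \eqref{eq_impor}, gives $\mu_p(\mathcal{F},\mathcal{B}_0) - \tau_p(\mathcal{F},\mathcal{B}_0) = \mu_p(\mathcal{B}_0) - \tau_p(\mathcal{B}_0)$. Combining these two displays and solving for $\mu_p(\mathcal{F}) - \tau_p(\mathcal{F},\mathcal{B}_0)$ produces the claimed formula directly.

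For the equivalence, note that $\mu_p(\mathcal{F}) = \tau_p(\mathcal{F},\mathcal{B}_0)$ holds if and only if the right-hand side of the displayed formula vanishes, i.e.
\[
\mu_p(\mathcal{B}_0) - \tau_p(\mathcal{B}_0) - \mu_p(\mathcal{F},\mathcal{B}_\infty) + \chi_p(\mathcal{F}) + 1 = 0,
\]
which rearranges at once to $\mu_p(\mathcal{B}_0) - \tau_p(\mathcal{B}_0) = \mu_p(\mathcal{F},\mathcal{B}_\infty) - \chi_p(\mathcal{F}) - 1$. This is purely formal once the main formula is in hand.

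For the final ``in particular'' assertion, suppose $\mathcal{B}$ is effective, so $\mathcal{B}_\infty = \emptyset$ and hence $\mu_p(\mathcal{F},\mathcal{B}_\infty) = 1$ by the convention stated after \eqref{eq:gmul}. The equivalence from the previous paragraph then reads $\mu_p(\mathcal{B}_0) - \tau_p(\mathcal{B}_0) = 1 - \chi_p(\mathcal{F}) - 1 = -\chi_p(\mathcal{F})$, i.e. $\mu_p(\mathcal{B}_0) - \tau_p(\mathcal{B}_0) + \chi_p(\mathcal{F}) = 0$. Here the key observation is a nonnegativity argument: $\chi_p(\mathcal{F}) \geq 0$ by \cite[Proposition 3.1]{FP-GB-SM2021}, and $\mu_p(\mathcal{B}_0) - \tau_p(\mathcal{B}_0) \geq 0$ since the Tjurina number of any reduced plane curve is at most its Milnor number. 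A sum of two nonnegative integers is zero precisely when both vanish, giving $\mu_p(\mathcal{B}_0) = \tau_p(\mathcal{B}_0)$ and $\chi_p(\mathcal{F}) = 0$, and conversely these two conditions clearly force the sum to be zero.

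The only mild subtlety — and the step I would be most careful about — is the identification, when $\mathcal{B}_0$ is a primitive effective divisor, of $\mu_p(\mathcal{F},\mathcal{B}_0)$ and $\tau_p(\mathcal{F},\mathcal{B}_0)$ (defined divisor-theoretically via \eqref{eq:gmul} and via the colength of $(P,Q,f)$) with the curve-theoretic quantities $\mu_p(\mathcal{F},C_0)$ and $\tau_p(\mathcal{F},C_0)$ to which Proposition \ref{prop_1} applies; this compatibility is exactly the remark made right after \eqref{eq:gmul} that for a primitive effective divisor one recovers \cite[Equation (2.2), p. 329]{K-S} for the reduced curve $C_0 = \cup_i B_i$, together with the definition of $\tau_p(\mathcal{F},C)$ being intrinsic to the reduced curve. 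Everything else is bookkeeping.
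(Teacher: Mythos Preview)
Your proposal is correct and follows essentially the same route as the paper: combine \eqref{eq:IaMC} with \eqref{mudivisor} to express $\mu_p(\F,\B_0)$ in terms of $\mu_p(\F)$, $\chi_p(\F)$, and $\mu_p(\F,\B_\infty)$, then invoke Proposition~\ref{prop_1} for the reduced curve underlying $\B_0$ and rearrange; the ``in particular'' clause is handled in both by the nonnegativity of $\mu_p(\B_0)-\tau_p(\B_0)$ and of $\chi_p(\F)$. Your added remark about identifying the divisor-theoretic and curve-theoretic invariants for a primitive effective $\B_0$ is a useful clarification that the paper leaves implicit.
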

\begin{proof}
It follows from \eqref{eq:IaMC} and \eqref{mudivisor} that $\mu_{p}(\mathcal F)=\mu_{p}(\mathcal{F},\mathcal{B}_0)-\mu_p(\F,\B_{\infty})+\chi_p(\mathcal{F})+1$. The proof follows by applying Proposition \ref{prop_1} to $\B_0$, and substituting into the last equation.
Moreover, $\mu_p(\F)=\tau_p(\F,\B_0)$ if and only if \[\mu_p(\B_0)-\tau_p(\B_0)=\mu_p(\F,\B_{\infty})-\chi_p(\F)-1.\]
If $\B$ is effective, then the  last part of the proposition follows since $\mu_p(\B_0)-\tau_p(\B_0)$ and $\chi_p(\F)$ are non-negative integers.
\end{proof}
\par The following corollary generalizes \cite[Corollaries 3.4]{FS-GB-SM}.
\begin{corollary} \label{cor:iguales}
Let $\F$ be a germ of a singular holomorphic foliation  at $(\C^2,p)$. Let $\B$ be an effective primitive balanced divisor of separatrices. If $\mu_p(\F)=\tau_p(\F,\B)$ then, after a suitable holomorphic
coordinate transformation, $\B$ is quasi-homogeneous and either $\F$   has algebraic multiplicity $1$ at $p$ or $\F$ is of second type.
\end{corollary}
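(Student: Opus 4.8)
The plan is to deduce Corollary~\ref{cor:iguales} directly from Proposition~\ref{iguales} together with the two corollaries proved earlier in this section. First I would invoke the last sentence of Proposition~\ref{iguales}: since $\mathcal{B}$ is effective (so $\mathcal{B}=\mathcal{B}_0$ and $\mathcal{B}_\infty$ is empty), the hypothesis $\mu_p(\mathcal{F})=\tau_p(\mathcal{F},\mathcal{B})$ is equivalent to the conjunction $\mu_p(\mathcal{B})=\tau_p(\mathcal{B})$ and $\chi_p(\mathcal{F})=0$. So the proof splits into extracting two separate conclusions from these two equalities.

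For the quasi-homogeneity statement, I would argue as follows. From $\mu_p(\mathcal{F},\mathcal{B})=\mu_p(\mathcal{F})$ and \eqref{eq:IaMC} we already have $\chi_p(\mathcal{F})=0$; but more to the point, Proposition~\ref{prop_1} applied to the invariant reduced curve $C=\mathcal{B}$ gives $\mu_p(\mathcal{F},\mathcal{B})-\tau_p(\mathcal{F},\mathcal{B})=\mu_p(\mathcal{B})-\tau_p(\mathcal{B})$, which under the hypothesis forces $\mu_p(\mathcal{B})=\tau_p(\mathcal{B})$. By Saito's theorem (\cite[Satz p.123, (a) and (d)]{Saito}), the equality $\mu_p(\mathcal{B})=\tau_p(\mathcal{B})$ holds exactly when $\mathcal{B}$ is quasi-homogeneous after a holomorphic change of coordinates — this is precisely the content of the second corollary in Section~4. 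Alternatively one can cite that corollary directly, since $\mu_p(\mathcal{F},\mathcal{B})=\tau_p(\mathcal{F},\mathcal{B})$ is its hypothesis and quasi-homogeneity of $\mathcal{B}$ its conclusion.

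For the dichotomy on $\mathcal{F}$, I would use $\chi_p(\mathcal{F})=0$. The remark following \eqref{eq:IaMineq} records that if $\mathcal{B}$ is a primitive balanced divisor of separatrices and $\mu_p(\mathcal{F},\mathcal{B})=\mu_p(\mathcal{F})$ — equivalently $\chi_p(\mathcal{F})=0$ by \eqref{eq:IaMC} — then by \cite[Proposition 3.1]{FP-GB-SM2021} the foliation $\mathcal{F}$ is of second type or has algebraic multiplicity one at $p$. That is exactly the stated alternative, so no further work is needed once $\chi_p(\mathcal{F})=0$ is in hand.

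There is no real obstacle here: the corollary is a bookkeeping consequence of Proposition~\ref{iguales}, Proposition~\ref{prop_1}, Saito's characterisation, and \cite[Proposition 3.1]{FP-GB-SM2021}. The only point requiring a line of care is making sure the effectivity hypothesis is used correctly — namely that "$\mathcal{B}$ effective" legitimately identifies $\mathcal{B}$ with $\mathcal{B}_0$ and makes $\mathcal{B}_\infty$ the empty divisor, so that the clean equivalence "($\mu_p(\mathcal{B}_0)=\tau_p(\mathcal{B}_0)$ and $\chi_p(\mathcal{F})=0$)" from the last sentence of Proposition~\ref{iguales} applies verbatim. Once that is noted, the two desired conclusions follow from the two equalities as above.
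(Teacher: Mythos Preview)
Your proposal is correct and follows essentially the same route as the paper's proof: invoke the effective case of Proposition~\ref{iguales} to extract $\mu_p(\mathcal{B})=\tau_p(\mathcal{B})$ and $\chi_p(\mathcal{F})=0$, then conclude quasi-homogeneity via Saito's theorem and the dichotomy via \cite[Proposition~3.1]{FP-GB-SM2021}. The paper's argument is just a more compressed version of what you wrote.
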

\begin{proof}
Since $\mu_p(\F)=\tau_p(\F,\B)$ and $\B$ is effective, by Proposition \ref{iguales} we obtain $\mu_p(\B)=\tau_p(\B)$. Hence, $\B$ belongs to its Jacobian ideal. The corollary follows from \cite[Proposition 3.1 (3)]{FP-GB-SM2021} and \cite[Satz p.123, (a) and (d)]{Saito}.
\end{proof}

\section{Proof of Theorem \ref{DG}}\label{principal}
In this section, we prove Theorem \ref{DG}. First, we present the following proposition.
\begin{proposition}\label{xi}
Let $\F$ be a germ of a singular foliation  at $(\C^2,p)$ and $\B=\B_0-\B_{\infty}$ be a balanced divisor of separatrices for $\F$ at $p$.  If $\mu_{p}(\mathcal{B}_{0}) \leq \mu_{p}(\mathcal{F})$,  then

\begin{equation}\label{ec1}
\frac{\mu_{p}(\mathcal{F})}{\tau_{p}(\mathcal{F},\mathcal{B}_{0})+\chi_{p}(\mathcal{F})-\mu_p(\F,\B_{\infty})+1}<\frac{4}{3}.
\end{equation}
\end{proposition}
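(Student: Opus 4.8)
The plan is to reduce \eqref{ec1} to the Dimca--Greuel bound for plane curves applied to the effective divisor $\mathcal B_0$, using only the identities already at our disposal. \emph{First}, I would rewrite the denominator of \eqref{ec1} in purely curve‑theoretic terms. Applying Proposition \ref{prop_1} to $\mathcal B_0$ gives
\[
\tau_p(\mathcal F,\mathcal B_0)=\mu_p(\mathcal F,\mathcal B_0)-\bigl(\mu_p(\mathcal B_0)-\tau_p(\mathcal B_0)\bigr),
\]
while combining \eqref{eq:IaMC} with \eqref{mudivisor} yields
\[
\mu_p(\mathcal F)=\mu_p(\mathcal F,\mathcal B_0)-\mu_p(\mathcal F,\mathcal B_{\infty})+\chi_p(\mathcal F)+1 .
\]
Substituting the first identity into $\tau_p(\mathcal F,\mathcal B_0)+\chi_p(\mathcal F)-\mu_p(\mathcal F,\mathcal B_{\infty})+1$ and then using the second, the denominator collapses to
\[
\tau_p(\mathcal F,\mathcal B_0)+\chi_p(\mathcal F)-\mu_p(\mathcal F,\mathcal B_{\infty})+1=\mu_p(\mathcal F)-\bigl(\mu_p(\mathcal B_0)-\tau_p(\mathcal B_0)\bigr).
\]
So \eqref{ec1} is equivalent to $\dfrac{\mu_p(\mathcal F)}{\mu_p(\mathcal F)-(\mu_p(\mathcal B_0)-\tau_p(\mathcal B_0))}<\dfrac{4}{3}$.

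\emph{Second}, I would clear the denominator, which requires knowing it is positive. Since $\mathcal F$ is a germ of a \emph{singular} foliation we have $\mu_p(\mathcal F)\geq 1$; together with the classical bound $0\le\mu_p(\mathcal B_0)-\tau_p(\mathcal B_0)$ and the hypothesis $\mu_p(\mathcal B_0)\le\mu_p(\mathcal F)$, and $\mu_p(\mathcal B_0)-\tau_p(\mathcal B_0)<\tfrac14\mu_p(\mathcal B_0)$ from \cite[Theorem 3.2]{Almiron} (when $\mu_p(\mathcal B_0)>0$), the denominator is at least $\tfrac34\mu_p(\mathcal F)>0$. Hence the displayed inequality is equivalent to $3\mu_p(\mathcal F)<4\mu_p(\mathcal F)-4\bigl(\mu_p(\mathcal B_0)-\tau_p(\mathcal B_0)\bigr)$, that is, to
\[
4\bigl(\mu_p(\mathcal B_0)-\tau_p(\mathcal B_0)\bigr)<\mu_p(\mathcal F).
\]

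\emph{Finally}, I would establish this last inequality. If $\mathcal B_0$ is empty or smooth then $\mu_p(\mathcal B_0)=\tau_p(\mathcal B_0)=0$ and the inequality reads $0<\mu_p(\mathcal F)$, which holds. Otherwise $\mu_p(\mathcal B_0)>0$, and \cite[Theorem 3.2]{Almiron} gives $4\bigl(\mu_p(\mathcal B_0)-\tau_p(\mathcal B_0)\bigr)<\mu_p(\mathcal B_0)\le\mu_p(\mathcal F)$ by hypothesis; chaining these finishes the proof.

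The only genuinely nontrivial input is the Dimca--Greuel bound for plane curves of \cite{Almiron}; everything else is bookkeeping of the $\chi$‑number and of the various $+1$ terms coming from the additivity formula \eqref{eq:gmul} for $\mu_p(\mathcal F,\cdot)$ along divisors. The point requiring the most care is the sign convention $\mathcal B=\mathcal B_0-\mathcal B_{\infty}$ when juggling \eqref{eq:IaMC} and \eqref{mudivisor}, together with the verification that the denominator of \eqref{ec1} is strictly positive, since that is what legitimises clearing it.
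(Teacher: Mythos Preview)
Your proof is correct and follows essentially the same route as the paper's. The paper invokes Proposition \ref{iguales} directly (which you instead reassemble from Proposition \ref{prop_1} together with \eqref{eq:IaMC} and \eqref{mudivisor}), applies Almir\'on's bound to $\mathcal B_0$, and then uses the hypothesis $\mu_p(\mathcal B_0)\le\mu_p(\mathcal F)$ to conclude; your extra care in checking that the denominator is strictly positive and in treating the degenerate case $\mu_p(\mathcal B_0)=0$ separately is a point the paper leaves implicit.
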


\begin{proof}  
By \cite[Theorem 3.2]{Almiron}, we have
$
\mu_{p}(\mathcal{B}_{0})-\tau_{p}(\mathcal{B}_{0}) <  \frac{\mu_{p}(\mathcal{B}_{0})}{4}
$
so  after Proposition \ref{iguales} we get
\begin{eqnarray*}
\mu_{p}(\mathcal{F})-\tau_{p}(\mathcal{F},\mathcal{B}_{0})&<&\frac{\mu_{p}(\mathcal{B}_{0})}{4}-\mu_p(\F,\B_{\infty})+\chi_{p}(\mathcal{F})+1\\
&\leq&\frac{\mu_{p}(\mathcal{F})}{4}-\mu_p(\F,\B_{\infty})+\chi_{p}(\mathcal{F})+1,
\end{eqnarray*}
where we use the hypothesis for the last inequality. Hence
\[
3\mu_p(\F)-4\tau_p(\F,\B_0)<4\left(\chi_p(\F)-\mu_p(\F,\B_{\infty})\right)+4
\]
and the proposition follows.
\end{proof}
\begin{remark}
The hypothesis $\mu_{p}(\mathcal{B}_{0}) \leq \mu_{p}(\mathcal{F})$ of Proposition \ref{xi} is essential as the following example illustrates: we consider the Suzuki's foliation (see for instance \cite[p. 75]{Cerveau-Mattei}) given by \[\F:\omega=(2y^2+x^3)dx-2xy dy\] whose separatrices are $C:x=0$ and $C_c:y^2-cx^2-x^3=0$, where $c\in \C$. Consider the balanced divisor of separatrices $\B:=C+C_0+C_1-C_2$.
After some computations we have: $17=\mu_0(\B_0)\not\leq  \mu_0(\F)=5$, $\mu_0(\F,\B_{\infty})=3$, $\tau_0(\F,\B_0)=5$, and $\chi_{0}(\F)=0$ since $\F$ is a curve generalized foliation. So the left term of \eqref{ec1} is $\frac{5}{3}$ and Proposition \ref{xi} does not hold.
\end{remark}
\subsection{Proof of Theorem A} 
Since $\B$ is an effective, primitive, balanced divisor of separatrices for $\F$ at $p$, we have $\B_{\infty}=\emptyset$ and $GSV_{p}(\mathcal{F},\B)=\Delta_p(\mathcal{F},\B)$, where $\Delta_p(\mathcal{F},\B)$ denotes the polar excess number of $\mathcal F$ with respect to $\B$, and this number is non-negative because it is a sum of non-negative numbers (see \cite[Equality(3.12)]{FP-M}).  Hence, $GSV_p(\F,\B)=\mu_p(\F,\B)-\mu_p(\B)\geq 0$ by \eqref{eq_5}, which implies that $\mu_p(\F,\B)\geq \mu_p(\B)$.
From \eqref{eq:IaMineq}, we get $\mu_p(\F)\geq \mu_p(\F,\B)\geq \mu_p(\B)$. Thus, Theorem \ref{DG} is a consequence of Proposition \ref{xi}. $\square$

\begin{example}
Returning to  Example \ref{eje1}, the family $\F_k$ of dicritical foliations which are not of second type with $k\geq 3$. For the effective  balanced divisor of separatrices $\mathcal B=\mathcal B_0: xy=0$ of $\F_k$ we have $\tau_0(\F_k, \B_{0})=3k-2$, $\mu_0(\F_k)=k(2k-1)$ and $\chi_0(\F_k)=2(k-1)^{2}$. Theorem  \ref{DG} is verified.
\end{example}

\section{Applications to global foliations}\label{applications}
In this section, we work with global foliations on the complex projective plane $\mathbb{P}^2_\C$. Let $\F$ be a holomorphic foliation on $\mathbb{P}^2_\C$. The number of points of tangency counted with multiplicities, between $\F$ and a non-invariant $L\subset\mathbb{P}^2_\C$ is the \textit{degree} of the foliation, denoted by $d$. 
\par A holomorphic foliation $\F$ on $\mathbb{P}^2_\C$ of degree $d$, is a homogeneous $1-$ form $\omega=A(x,y,z)dx+B(x,y,z)dy+C(x,y,z)dz$ with $A, B$ and $C$ homogeneous polynomials of degree $d+1$ such that $xA+yB+zC=0$.

\subsection{New proof of a global result of Cerveau-Lins Neto}
In \cite[p. 885]{Cerveau-LN} Cerveau and Lins Neto established the following proposition. The main ingredients in their proof are the reduction of singularities of foliations and the Poincar\'e-Hopf formula. Here, we provide an alternative proof using our previous results and some known local formulas. 
\begin{proposition}(Cerveau-Lins Neto \cite{Cerveau-LN})\label{prop_cl}
Let $C$ be an irreducible curve on $\mathbb{P}^2_\C$ of degree $d_0$ and $\F$ be a holomorphic foliation of degree $d$ having $C$ as a separatrix. Then
\[2-2g(C)=\sum_p\sum_i\mu_p(\F,C_i)-d_0(d-1),\]
where $g(C)$ is the geometric genus of $C$ and the sum is taken over all local branches $C_i$ of $C$ passing through the singularities $p$ of $\F$ in $C$. 
\end{proposition}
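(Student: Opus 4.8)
The plan is to reduce the global statement to the local identity \eqref{eq_impor} (equivalently \eqref{eq_5}) together with standard global formulas on $\mathbb{P}^2_\C$. First I would recall the two classical global ingredients: the genus formula relating the geometric genus of the irreducible curve $C$ of degree $d_0$ to its arithmetic genus via the local delta invariants, namely
\[
2g(C)-2 = d_0(d_0-3) - 2\sum_{p}\delta_p(C),
\]
and the formula for the sum of local Milnor numbers of a curve of degree $d_0$, which follows from Bézout applied to a generic polar pencil, namely $\sum_p \mu_p(C) = (d_0-1)^2 - (\text{contribution if } C \text{ is singular at infinity, which we may avoid by a generic choice of line at infinity})$; more invariantly, for an irreducible plane curve one has $\sum_p \mu_p(C) = (d_0-1)(d_0-2) - 2\sum_p(\delta_p(C) - \tfrac{1}{2}(\mu_p(C) \text{ vs } \delta_p))$... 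I would instead use the cleaner route through $\sum_p \mu_p(C)$ and $\sum_p \delta_p(C)$ linked by the local relation $\mu_p(C) = 2\delta_p(C) - r_p + 1$ where $r_p$ is the number of local branches at $p$.

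Next, the key foliation-theoretic input: summing \eqref{eq_5} over all singular points $p$ of $\F$ lying on $C$ and over all local branches $C_i$ at such points gives
\[
\sum_p \sum_i GSV_p(\F,C_i) = \sum_p\sum_i \mu_p(\F,C_i) - \sum_p\sum_i \mu_p(C_i).
\]
The left-hand side is globally computable: the sum of local GSV-indices of $\F$ along (the branches of) an invariant algebraic curve $C$ of degree $d_0$ for a degree-$d$ foliation is $\sum GSV = C\cdot C - C\cdot T_\F = d_0^2 - d_0(d-1)$ by the standard adjunction/Brunella-type computation (this is the global GSV formula; it is the algebraic count of the self-intersection of $C$ minus its intersection with the tangent bundle of $\F$). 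Meanwhile $\sum_p\sum_i\mu_p(C_i)$ is expressible through $\delta$-invariants of the branches, and assembling the branch-wise data into the curve-wise genus formula is exactly the combinatorial bookkeeping $\sum_i \delta_p(C_i) + \sum_{i<j} I_p(C_i,C_j) = \delta_p(C)$ at each $p$.

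So the steps in order are: (i) write $2-2g(C) = d_0(3-d_0) + 2\sum_p\delta_p(C)$; (ii) use the global GSV identity $\sum_p\sum_i GSV_p(\F,C_i) = d_0^2 - d_0(d-1)$, noting we must first pass from the curve-wise GSV to the branch-wise sum via \eqref{eq_4}, which introduces the correction $-2\sum_{i<j}I_p(C_i,C_j)$ at each $p$; (iii) apply \eqref{eq_5} branch-by-branch to replace $GSV_p(\F,C_i)$ by $\mu_p(\F,C_i) - \mu_p(C_i)$; (iv) use the adjunction formula \eqref{eq_2} for curves (or directly the branch identities $\mu_p(C_i) = 2\delta_p(C_i) - 0$ since branches are unibranch, and $\delta_p(C) = \sum_i \delta_p(C_i) + \sum_{i<j}I_p(C_i,C_j)$) to recombine $\sum_i \mu_p(C_i) + 2\sum_{i<j}I_p(C_i,C_j)$ into $2\delta_p(C) = \mu_p(C) + r_p - 1$, hence into the genus; (v) substitute everything and simplify, watching that the $-r_p+1$ terms and the $d_0^2$ versus $d_0(d_0-3)$ discrepancy cancel correctly, leaving $2-2g(C) = \sum_p\sum_i\mu_p(\F,C_i) - d_0(d-1)$.

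The main obstacle I expect is step (ii): justifying the global value $\sum GSV_p(\F,C) = d_0^2 - d_0(d-1)$ cleanly and making sure the passage between the curve-wise GSV and the branch-wise sum (via \eqref{eq_4}) interlocks correctly with the branch-wise application of \eqref{eq_5} — in other words, keeping honest track of all the $I_p(C_i,C_j)$ correction terms so that the $\delta$-invariant bookkeeping in step (iv) exactly absorbs them. A secondary subtlety is ensuring the line at infinity is chosen generically (or arguing invariance) so that the naive degree-based formulas for $\sum_p\mu_p(C)$ and the genus are valid without boundary corrections; since the statement is intrinsic, one can instead cite the projective adjunction formula directly and avoid the genericity discussion altogether.
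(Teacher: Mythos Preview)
Your overall strategy coincides with the paper's: combine the local identity \eqref{eq_5} with the global GSV-index formula on $\mathbb{P}^2_\C$, the genus formula $g(C)=\tfrac{(d_0-1)(d_0-2)}{2}-\sum_p\delta_p$, and Milnor's relation $\mu_p(C)=2\delta_p-r_p+1$. The only cosmetic difference is that the paper applies \eqref{eq_5} directly to the reduced curve $C$ (it has already been proved for reducible curves in Proposition~\ref{prop_1}) and then uses \eqref{eq_3} to split $\mu_p(\F,C)$ into branches, whereas you apply \eqref{eq_5} branch-by-branch and then invoke \eqref{eq_4}; these two routes are equivalent and involve exactly the same bookkeeping with the $I_p(C_i,C_j)$ terms.

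There is, however, one concrete error that would derail your computation as written. The global GSV formula you quote in step~(ii) is wrong: for a degree-$d$ foliation on $\mathbb{P}^2_\C$ with invariant curve $C$ of degree $d_0$, Brunella's formula \cite[Proposition~2.3]{Brunella-book} gives
\[
\sum_{p} GSV_p(\F,C)=N_\F\cdot C - C\cdot C=(d+2)d_0 - d_0^2,
\]
since $N_\F=\mathcal{O}_{\mathbb{P}^2}(d+2)$. Your expression $C\cdot C - C\cdot T_\F=d_0^2-d_0(d-1)$ (apparently with $K_\F=\mathcal{O}(d-1)$ in place of $T_\F$) is a different number. If you run your steps (iii)--(v) with your value, you obtain
\[
\sum_p\sum_i\mu_p(\F,C_i)-d_0(d-1)=d_0^2-2d_0(d-1)+2\sum_p\delta_p,
\]
which equals $2-2g(C)=3d_0-d_0^2+2\sum_p\delta_p$ only when $2d_0=2d+1$, an impossibility for integers. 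With the correct value $(d+2)d_0-d_0^2$ substituted, your branch-wise bookkeeping in (iv) (namely $\sum_i\mu_p(C_i)+2\sum_{i<j}I_p(C_i,C_j)=\mu_p(C)+r_p-1=2\delta_p$) goes through and yields exactly the stated identity. So the gap is not structural but a single wrong constant in the global GSV input; once corrected, your proof matches the paper's.
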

\begin{proof}
For each singularity $p$ of $\F$ such that $p\in C$, we have by \eqref{eq_5},

\[\mu_p(\F,C)=GSV_p(\F,C)+\mu_p(C).\] 
Therefore, by \eqref{eq_3}
\begin{equation}\label{eq_sum}
\sum_p\sum_i\mu_p(\F,C_i)-\sum_p(r_p-1)=\sum_p GSV_p(\F,C)+\sum_p \mu_p(C),
\end{equation}
 where $r_p$ is the number of branches at $p$.
Since $C$ is irreducible, the geometric genus formula  gives
\begin{equation}\label{eq_g}
g(C)=(d_0-1)(d_0-2)/2-\sum_p \delta_p,
\end{equation}
where $\delta_p=\dim_\C\widetilde{\mathcal{O}_p}/\mathcal{O}_p$ is the co-length of the local ring $\mathcal{O}_p$ of $(C,p)$ in its normalization. It follows from the \textit{Milnor formula} \cite[Theorem 10.5]{milnor} that
\begin{equation}\label{eq_m}
\mu_p(C)=2\delta_p-r_p+1.
\end{equation}
 The proof follows by combining \eqref{eq_g}, \eqref{eq_m}, and the GSV-index formula \cite[Proposition 2.3]{Brunella-book}
\[\sum_p GSV_p(\F,C)=(d+2)d_0-d_0^2\]
and then substituting these expressions into \eqref{eq_sum}.
\end{proof}
\subsection{New proof of a global result of Soares}
In \cite{Soares}, Soares  studied the problem of bounding the degree of an algebraic curve $C$ invariant for a foliation $\F$ on $\mathbb{P}^2_\C$ in terms of the degree of $\F$, in order to give an answer to the
 \textit{Poincaré Problem}. For a recent account of this problem, we refer to \cite{Correa} and the references therein. For a local version of the Poincar\'e problem, see \cite{Fortuny}. 
\par Before stating the main result of this subsection, we need the following remark.

\begin{remark}\label{remar1}
    We note that $\tau_p(\F,C)\geq 1$ for all  singularity $p$ of $\F$. Indeed,  there would exist a point $q\in C$ which is a singularity of $\F$ with $\tau_q(\F,C)=0$, then,  we can take local coordinates centered at $q$ such that $q=(0,0)$, $\F$ is given by a 1-form $\omega_q=P(x,y)dx+Q(x,y)dy$ and $C=\{f(x,y)=0\}$. Since   $\tau_q(\F,C)=0$, it follows that $1\in(P,Q,f)$ so there exist $g,h,k\in\C\{x,y\}$ such that 
    \begin{equation}\label{eq:global}
    1=g\cdot P+h\cdot Q+k\cdot f.
    \end{equation}
    
    However, since $q\in C$ is a singularity of $\F$, we have $P(0,0)=Q(0,0)=f(0,0)=0$. Substituting these values into equation \eqref{eq:global} leads to a contradiction.
\end{remark}
\par M. Soares proved the following theorem using polar varieties and characteristic classes.
%Recently, in \cite[Theorem 4]{Machado}, the author presents a more general version of Soares's theorem in higher dimensions.
In \cite{machado3}, the author obtained a characterization of Soares’s formula (in higher dimensions), in terms of the Schwartz and GSV indices of the foliation. Also, in \cite{machado2}, the author obtained another generalization of Soares’s theorem for foliations on hypersurfaces with isolated singularities. In \cite{Machado1}, the author presents a more general version of Soares’s theorem in higher dimensions.
Here, we present a simpler proof based on the indices studied above. 
\begin{theorem}(Soares \cite[Theorem II]{Soares})\label{Soares ine}
Let $C$ be an irreducible curve of degree $d_0>1$ that is invariant by a foliation $\F$ on $\mathbb{P}^2_\C$ of degree $d\geq 2$ with isolated singularities. Then
\[d_0(d_0-1)-\sum_{p}(\mu_p(C)-1)\leq (d+1)d_0,\]
where the sum runs over all singularities $p$ of $C$.    
\end{theorem}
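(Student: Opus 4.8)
The plan is to reduce Soares's inequality to the global GSV-index formula of Brunella together with the adjunction/Milnor formulas already recalled in the proof of Proposition~\ref{prop_cl}. Since $C$ is irreducible of degree $d_0$ and invariant by $\F$, for every singularity $p$ of $\F$ lying on $C$ we have, by \eqref{eq_5}, $\mu_p(\F,C)=GSV_p(\F,C)+\mu_p(C)$, and by the Milnor formula \eqref{eq_m}, $\mu_p(C)=2\delta_p-r_p+1$. Summing over $p$ and using Brunella's formula $\sum_p GSV_p(\F,C)=(d+2)d_0-d_0^2$ gives $\sum_p\mu_p(\F,C)=(d+2)d_0-d_0^2+\sum_p\mu_p(C)$.

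Next I would bring in the Tjurina number of the foliation. By Proposition~\ref{prop_1}, equation \eqref{eq_impor}, applied at each $p$ we have $\mu_p(\F,C)-\tau_p(\F,C)=\mu_p(C)-\tau_p(C)$, hence $\sum_p\tau_p(\F,C)=\sum_p\mu_p(\F,C)-\sum_p\mu_p(C)+\sum_p\tau_p(C)=(d+2)d_0-d_0^2+\sum_p\tau_p(C)$. Combining this with Remark~\ref{remar1}, which guarantees $\tau_p(\F,C)\geq 1$ at every singularity of $\F$ on $C$, and noting that the singularities of $\F$ on $C$ include all singular points of $C$ (a smooth point of $C$ that is a regular point of $\F$ contributes $\tau_p(C)=0$, so it does no harm to sum $\tau_p(C)$ over the finitely many singularities of $C$ and over $\mathrm{Sing}(\F)\cap C$ simultaneously), we obtain
\[
\#\big(\mathrm{Sing}(\F)\cap C\big)\;\leq\;\sum_p\tau_p(\F,C)\;=\;(d+2)d_0-d_0^2+\sum_p\tau_p(C).
\]
On the other hand, the classical inequality $\tau_p(C)\leq\mu_p(C)$ (or, more sharply, the comparison between $\mu_p(C)$ and the local data) lets one pass from $\sum\tau_p(C)$ to $\sum(\mu_p(C)-1)$; the crux is precisely to organize the count of singular points so that the $-1$ per point in the desired inequality $d_0(d_0-1)-\sum_p(\mu_p(C)-1)\leq(d+1)d_0$ is absorbed by the strict positivity $\tau_p(\F,C)\geq 1$.

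Concretely, rewriting $(d+2)d_0-d_0^2=(d+1)d_0-d_0(d_0-1)$, the displayed bound becomes
\[
d_0(d_0-1)\;\leq\;(d+1)d_0-\#\big(\mathrm{Sing}(\F)\cap C\big)+\sum_p\tau_p(C),
\]
and since $\tau_p(C)\leq\mu_p(C)$ with $\tau_p(C)=\mu_p(C)$ only in the quasi-homogeneous case, bounding $\sum_p\tau_p(C)\leq\sum_p(\mu_p(C))$ and pairing each summand with one unit from $\#(\mathrm{Sing}(\F)\cap C)\geq\#\{\text{singular points of }C\}$ yields $d_0(d_0-1)\leq(d+1)d_0+\sum_p(\mu_p(C)-1)$, which is the claim after moving the sum to the left. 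The main obstacle I anticipate is bookkeeping: making sure the index set over which $\mu_p(C)$, $\tau_p(C)$, and $\tau_p(\F,C)$ are summed is handled consistently (singularities of $C$ versus singularities of $\F$ on $C$), and verifying that at points of $C$ that are singular for $\F$ but smooth for $C$ the contributions $\mu_p(C)-1=0$ and $\tau_p(C)=0$ genuinely cause no loss, so that the per-point unit needed for the $-1$ in Soares's bound is always available from $\tau_p(\F,C)\geq 1$.
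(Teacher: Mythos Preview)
Your proof is correct and relies on the same ingredients as the paper (Brunella's GSV-index formula, Proposition~\ref{prop_1}, and Remark~\ref{remar1}). The paper's route is more direct: instead of passing through $\tau_p(C)$ and then invoking $\tau_p(C)\le\mu_p(C)$, it observes from $\mu_p(\F,C)\ge\tau_p(\F,C)\ge 1$ together with \eqref{eq_5} that the pointwise inequality $\mu_p(C)-1\ge -GSV_p(\F,C)$ holds at every $p\in\mathrm{Sing}(\F)\cap C$, and then sums---your detour through $\tau_p(C)$ is harmless but unnecessary, and the bookkeeping issue you anticipate disappears once you argue pointwise.
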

\begin{proof}
For each singularity $p$ of $\F$ such that $p\in C$, we have, by \eqref{eq_5},
\begin{equation}\label{eq_soa}
    \mu_p(C)=\mu_p(\F,C)-GSV_p(\F,C).
\end{equation} 
It follows from \eqref{eq_impor} that $\mu_p(\F,C)\geq\tau_p(\F,C)$, since $\mu_p(C)\geq\tau_p(C)$, and therefore $\mu_p(\F,C)\geq 1$ by Remark \ref{remar1}. In particular, from \eqref{eq_soa}, we obtain
\begin{eqnarray}\label{eq_soares}
    \mu_p(C)-1\geq -GSV_p(\F,C).
\end{eqnarray}
The proof follows by summing over all singularities of $\F$ in $C$ and using the GSV-index formula \cite[Proposition 2.3]{Brunella-book}
$\sum_p GSV_p(\F,C)=(d+2)d_0-d_0^2$,
and then substituting this equality into \eqref{eq_soares}.
\end{proof}
\subsection{Bounds of the global Tjurina number of a foliation}
Let $\F$ be a holomorphic foliation on $\mathbb{P}^2_\C$ of degree $d$, having an invariant reduced algebraic curve $C$ of degree $d_0$. We define the global Tjurina number of $\F$ with respect to $C$ as $\tau(\F,C):=\sum_p \tau_p(\F,C)$, which is the sum of the Tjurina number of $\F$ at all singularities $p$ of $\F$ on $C$. Similarly, we denote $\mu(C):=\sum_{p}\mu_p(C)$, the \textit{global Milnor number} of $C$, and $\tau(C):=\sum_p \tau_p(C)$, the \textit{global Tjurina number} of $C$.

\begin{remark}\label{cotamuuu}
Teissier's Proposition \cite[Chapter II, Proposition 1.2]{Teissier} implies that
\begin{equation}\label{ect}
    I_p(\mathcal{P}^{df},C)=\mu_p(C)+\nu_p(C)-1,
\end{equation}
 and by \cite[Theorem 4.18]{H} we get 
 \begin{equation}\label{eci}
  I_p(\mathcal{P}^{df},C) \geq  \nu_p(df)\cdot\nu_p(C)=(\nu_p(C)-1)\cdot\nu_p(C).   
 \end{equation} Using \eqref{ect} and \eqref{eci} we obtain $\mu_p(C)\geq (\nu_p(C)-1)^{2}.$     
\end{remark}

\par Now, we present bounds for the global Tjurina number of a foliation on $\mathbb{P}^2_\C$.
\begin{theorem}\label{tjurina}
Let $\F$ be a holomorphic foliation on $\mathbb{P}_{\C}^2$ of degree $d\geq 2$ having an invariant irreducible algebraic curve $C$ of degree $d_0\geq 2$. Then
\[\tau(\mathcal{F},C)=d_0(d-1)-2g(C)-\sum_p(r_p-1)-\mu(C)+\tau(C)+2,\]
where $g(C)$ is the geometric genus of $C$ and $r_p$ is the number of branches of $C$ at $p$.
Moreover, 
\[\tau(\mathcal{F},C) \leq d_0(d_0-3)+d(d+1)-2g(C)-\sum_p(r_p-1) - \sum_p (\nu_p(C)-1)^{2}+3,\] 
and if $C$ is not concurrent lines, then 
\[2-2g(C)+ \left[\frac{d_0}{2}\right]+d-d_0-\sum_p(r_p-1) \leq \tau(\mathcal{F},C).\]
\end{theorem}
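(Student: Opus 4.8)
The plan is to combine the local identity from Proposition \ref{prop_1} with the classical global formulas for the GSV-index, the geometric genus, and the Milnor formula, exactly in the spirit of the proofs of Proposition \ref{prop_cl} and Theorem \ref{Soares ine} given above. For the \emph{equality}, I would start from \eqref{eq_impor}, which reads $\mu_p(\F,C)-\tau_p(\F,C)=\mu_p(C)-\tau_p(C)$ at every singularity $p$ of $\F$ on $C$, and sum over all such $p$. Using the adjunction formula \eqref{eq_3} to collect $\sum_p\mu_p(\F,C)=\sum_p\sum_i\mu_p(\F,C_i)-\sum_p(r_p-1)$, and then invoking Proposition \ref{prop_cl} to rewrite $\sum_p\sum_i\mu_p(\F,C_i)=2-2g(C)+d_0(d-1)$, one solves for $\tau(\F,C)=\sum_p\tau_p(\F,C)$ and lands on
\[
\tau(\F,C)=d_0(d-1)-2g(C)-\sum_p(r_p-1)-\mu(C)+\tau(C)+2,
\]
which is the asserted identity. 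This step is essentially bookkeeping once the right formulas are assembled.

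For the \emph{upper bound}, I would take the equality just proved and bound $\tau(C)$ from above and $\mu(C)$ from below. The classical bound $\tau_p(C)\le\mu_p(C)$ is too weak; instead I would use the global relation coming from the geometric genus formula \eqref{eq_g} together with the Milnor formula \eqref{eq_m}, giving $\mu(C)=\sum_p(2\delta_p-r_p+1)=d_0^2-3d_0+2-2g(C)-\sum_p(r_p-1)\cdot 0+\dots$ — more precisely $\sum_p\delta_p=(d_0-1)(d_0-2)/2-g(C)$, hence $\mu(C)=(d_0-1)(d_0-2)-2g(C)-\sum_p(r_p-1)$. Wait — I would instead route through the total Tjurina number of a \emph{projective} curve: there is a global bound $\tau(C)\le \binom{d_0-1}{2}+\binom{d_0-2}{2}$ type estimate, but cleaner here is to bound $\tau(C)-\mu(C)\le 0$ and then bound $\tau(\F,C)\le d_0(d-1)-2g(C)-\sum_p(r_p-1)+2$, which is not yet the claimed form. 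To match the stated bound $d_0(d_0-3)+d(d+1)-2g(C)-\sum_p(r_p-1)-\sum_p(\nu_p(C)-1)^2+3$, I would instead substitute $\mu(C)\ge\sum_p(\nu_p(C)-1)^2$ from Remark \ref{cotamuuu}, use $\tau(C)\le\mu(C)$ is the wrong direction, so I would instead bound $\tau(C)$ by a global count: by Dimca–Greuel or the elementary estimate $\tau_p(C)\le \mu_p(C)$ combined with $\sum_p\mu_p(C)=(d_0-1)^2-\sum_p(\text{something})$; concretely I expect the clean route is $\tau(C)\le (d_0-1)^2$ (valid since $\tau(C)\le\mu(C)\le (d_0-1)^2$ for a degree-$d_0$ plane curve), which upon substitution into the equality, together with $\mu(C)\ge\sum_p(\nu_p(C)-1)^2$, and the GSV formula $(d+2)d_0-d_0^2=\sum_p GSV_p(\F,C)$ rewritten via \eqref{eq_5}, yields after simplification the stated inequality; rearranging $d_0(d-1)+(d_0-1)^2=d_0d-d_0+d_0^2-2d_0+1=d_0^2-3d_0+d_0 d+1$ and absorbing the extra $d(d+1)-d_0 d$ slack forces me to instead feed in $d_0\le d+2$ (valid when $C$ has a singular point, by the Poincaré-type bound implicit in $\sum_p GSV_p\ge 0$, cf. Theorem \ref{Soares ine}), which converts $d_0^2$-terms into $d$-terms and produces exactly $d_0(d_0-3)+d(d+1)+\dots$.

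For the \emph{lower bound}, assuming $C$ is not a union of concurrent lines, I would use Remark \ref{remar1}, which gives $\tau_p(\F,C)\ge 1$ at every singularity of $\F$ on $C$, but that alone only yields $\tau(\F,C)\ge\#\mathrm{Sing}$; the sharper input is the adjunction-type lower bound for $\sum_p GSV_p(\F,C)$ restricted away from concurrent-lines configurations, combined with the equality. Concretely, starting again from the equality and writing $\tau(C)-\mu(C)=\tau(C)-\mu(C)$, I would invoke the inequality $\mu(C)-\tau(C)\le \mu(C)/4$ from \cite[Theorem 3.2]{Almiron} summed over singular points — no, better: I would use that $\tau(C)\ge$ (number of singular points) and the genus/Milnor formula to control $\mu(C)$, and crucially use a projective isoperimetric-type bound $\sum_p\mu_p(C)\le d_0^2-d_0-\lfloor d_0/2\rfloor$ or similar when $C$ is irreducible of degree $d_0\ge 2$ and not concurrent lines — this is the step I expect to be the main obstacle, namely identifying the precise classical estimate that produces the term $\lfloor d_0/2\rfloor$. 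Once that estimate is in hand, substituting into the equality $\tau(\F,C)=d_0(d-1)-2g(C)-\sum_p(r_p-1)-\mu(C)+\tau(C)+2$ and using $\tau(C)\ge 0$ together with the bound on $\mu(C)$ yields
\[
\tau(\F,C)\ge 2-2g(C)+\left[\tfrac{d_0}{2}\right]+d-d_0-\sum_p(r_p-1).
\]
The routine parts are the substitutions and the reorganization of the polynomial expressions in $d,d_0$; the delicate parts are (i) justifying the use of $d_0\le d+2$ in the upper bound via nonnegativity of the global GSV-index, and (ii) locating the sharp classical bound on $\mu(C)$ for an irreducible non-concurrent-lines curve that yields the floor term in the lower bound.
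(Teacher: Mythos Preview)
Your derivation of the \emph{equality} is correct and is exactly what the paper does: sum \eqref{eq_impor} over the singular points, use \eqref{eq_3}, and plug in Proposition \ref{prop_cl}.

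For the two \emph{inequalities}, however, there is a genuine gap: you are missing the one input that makes the constants come out right, namely the du Plessis--Wall bounds \cite[Theorem 3.2]{plessis}. Because $C$ is invariant by a foliation of degree $d$, there is a degree-$d$ vector field tangent to $C$, and du Plessis--Wall then gives
\[
(d_0-d-1)(d_0-1)\ \le\ \tau(C)\ \le\ (d_0-1)^2 - d(d_0-d-1).
\]
This is the only place in the argument where $d$ enters the bounds on $\tau(C)$; without it you cannot manufacture the term $d(d+1)$ in the upper bound or the term $d-d_0$ in the lower bound. Your replacement $\tau(C)\le(d_0-1)^2$ combined with $d_0\le d+2$ is both weaker and not generally valid (you would need $\sum_p GSV_p(\F,C)\ge 0$, which is not assumed), and in any case the algebra does not close: it yields $d_0(d_0-3)+d_0 d+3-\cdots$, not $d_0(d_0-3)+d(d+1)+3-\cdots$. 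Likewise, for the lower bound your $\tau(C)\ge 0$ is far too weak; you need the du Plessis--Wall \emph{lower} bound $\tau(C)\ge (d_0-d-1)(d_0-1)$ together with Shin's bound \cite[Corollary 3.4]{Shin}
\[
\mu(C)\ \le\ (d_0-1)^2-\Big[\tfrac{d_0}{2}\Big]
\]
(valid for $C$ not concurrent lines) to get exactly $2-2g(C)+[d_0/2]+d-d_0-\sum_p(r_p-1)$. Substituting these two external estimates and Remark \ref{cotamuuu} into the equality is the whole proof of the bounds; nothing involving Almir\'on's $\mu/4$ inequality, Remark \ref{remar1}, or the GSV formula is needed here.
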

\begin{proof}
For each singularity $p$ of $\F$ such that $p\in C$, we have 
\begin{equation*}
    \tau_p(\mathcal{F},C)=\mu_p(\mathcal{F},C)-\mu_p(C)+\tau_p(C)
\end{equation*}
by Proposition \ref{prop_1}. Then, using the definition of $\tau(\F,C)$, the equation \eqref{eq_3} and Proposition \ref{prop_cl}, we get
\begin{eqnarray}
   \tau(\mathcal{F},C)=\sum_p \tau_p(\mathcal{F},C)&=&\sum_p\mu_p(\mathcal{F},C)-\mu(C)+\tau(C)\nonumber\\
   &=& \sum_p\left(\sum_i\mu_p(\F,C_i)-r_p+1\right)-\mu(C)+\tau(C)\nonumber\\
   &=&  \sum_p\sum_i\mu_p(\F,C_i)-\sum_p(r_p-1)-\mu(C)+\tau(C)\nonumber\\
   &=&d_{0}(d-1)+2-2g(C)-\sum_p(r_p-1)-\mu(C)+\tau(C).\label{eq_tju}
\end{eqnarray}
 Moreover, by \cite[Corollary 3.4]{Shin}, if $C$ is not concurrent lines, we have
\begin{equation}\label{cotamu}
\mu(C)\leq (d_0-1)^{2}-\left[\frac{d_0}{2}\right].
\end{equation} 
Furthermore, it follows from \cite[Theorem 3.2]{plessis} that
\begin{equation}\label{cotatjurina}
(d_0-d-1)(d_0-1) \leq \tau(C) \leq (d_0-1)^{2} -d(d_0 -d-1).
\end{equation}
%\textcolor{red}{and by Remark \ref{cotamu}, we %get
%\begin{equation}\label{cotamu2}
%\mu_p(C)\geq (\nu_p(C)-1)^{2}.
%\end{equation}}
Finally, the theorem follows by substituting \eqref{cotamu}, \eqref{cotatjurina} and Remark \ref{cotamuuu} into \eqref{eq_tju}.
\end{proof}

\subsection{Alc\'antara--Mozo-Fern\'andez's conjecture}
Now, we address the following conjecture posed by Alc\'antara--Mozo-Fern\'andez \cite[p. 16]{Alcantara}:
{\it Does there exist a logarithmic, non-dicritical, and quasi-homogeneous foliation $\F$ on $\mathbb{P}^2_\C$, with a unique singular point $p$, having an invariant reduced algebraic curve $C$ passing through $p$? }

\par These authors indicate that they have some evidence suggesting that this type of foliation does not exist; however, they state that they do not yet have a solution to the problem. 
\par Recall that a foliation $\F$ on $\mathbb{P}^{2}_\C$ is \textit{logarithmic} if there exist homogeneous, irreducible polynomials $f_1,\ldots,f_m\in\C[x,y,z]$, of degrees $d_1,\ldots,d_m$, respectively, such that a  1-form describing the foliation is:
\[\Omega=f_1\cdot\ldots\cdot f_m\sum_{i=1}^{m}\lambda_i\frac{df_i}{f_i},\]
\noindent for some non-zero complex numbers $\lambda_1,\ldots,\lambda_m$ satisfying $\sum_{i=1}^{m}\lambda_i d_i=0$. Note that the foliation has degree $\deg(\F)=\sum_{i=1}^{m}d_i-2$. In particular, taking $C=\{f_1\cdots f_m=0\}$, we obtain 
\begin{equation}\label{eq_degree}
    \deg(C)=\deg(\F)+2.
\end{equation} 
A logarithmic foliation $\F$ on $\mathbb{P}^2_\C$ is always a generalized curve foliation, by \cite[Section 6]{FP-M}.
\par We also recall that a non-dicritical foliation $\F$ is \textit{quasi-homogeneous} at $p$, if the set of all separatrices of $\F$ passing by $p$ is a germ of curve given in some coordinates by a quasi-homogeneous polynomial, see \cite[p. 1657]{Genzmer2008}.
\par We propose the following answer in the case where $C$ is irreducible. Note that in this situation it is not necessarily assume that the foliation is quasi-homogeneous. 
\begin{theorem}
There does not exist a logarithmic, non-dicritical foliation $\F$ on $\mathbb{P}^2_\C$ of degree $d$, with a unique singular point $p$, having an invariant irreducible algebraic curve $C$ of degree $d_0\geq 2$ passing through $p$. 
\end{theorem}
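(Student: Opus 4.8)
The plan is to argue by contradiction, assuming such a logarithmic, non-dicritical foliation $\F$ on $\mathbb{P}^2_\C$ of degree $d$ exists, with unique singularity $p$ and an invariant irreducible algebraic curve $C$ of degree $d_0\geq 2$ through $p$. The strategy is to apply Theorem \ref{tjurina} to $\F$ and $C$ and compare with what a logarithmic foliation with a single singular point forces. First I would note that logarithmic foliations are, in particular, of second type at every singularity (indeed, their reduction produces only non-degenerate singularities or at worst non-tangent saddle-nodes), so Theorem \ref{DG} applies: at the unique singularity $p$ we have $\mu_p(\F)/\tau_p(\F,C)<4/3$, hence $3\mu_p(\F)<4\tau_p(\F,C)$. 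Since there is a single singularity, the global Tjurina number is $\tau(\F,C)=\tau_p(\F,C)$, and by the Poincar\'e--Hopf theorem the total Milnor number of $\F$ on $\mathbb{P}^2_\C$ equals $d^2+d+1$, concentrated at $p$, so $\mu_p(\F)=d^2+d+1$.

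Next I would extract an upper bound for $\tau(\F,C)$ from the upper estimate in Theorem \ref{tjurina},
\[
\tau(\mathcal{F},C)\leq d_0(d_0-3)+d(d+1)-2g(C)-\sum_p(r_p-1)-\sum_p(\nu_p(C)-1)^2+3,
\]
and simplify it using that $p$ is the only singularity: the sums over $p$ each involve only one term. Combining $3\mu_p(\F)<4\tau_p(\F,C)=4\tau(\F,C)$ with $\mu_p(\F)=d^2+d+1$ and the displayed upper bound, I obtain a numerical inequality relating $d$, $d_0$, $g(C)$, $r_p$ and $\nu_p(C)$. The key is that the right-hand side grows like $4d_0^2+4d^2$ while the left-hand side grows like $3d^2$, so to derive a contradiction I must control $d_0$ in terms of $d$. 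Here I would invoke the classical constraint that for an irreducible invariant curve of a degree-$d$ foliation one has $d_0\leq d+2$ (a Poincar\'e-type bound that holds in this setting, e.g. via the GSV formula $\sum_p GSV_p(\F,C)=(d+2)d_0-d_0^2\geq 0$ used in the proofs of Propositions \ref{prop_cl} and the Soares inequality, together with $GSV_p\geq 0$ which holds since $\F$ is of second type), forcing $d_0\le d+2$. Substituting $d_0\leq d+2$ should make the two sides comparable and, after using $g(C)\geq 0$, $r_p\geq 1$, $\nu_p(C)\geq 1$ and the lower genus/Tjurina bounds, pin the inequality down to a contradiction for all $d_0\geq 2$, $d\geq 1$.

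The main obstacle I anticipate is that the crude substitution of $d_0\leq d+2$ may not be tight enough: the upper bound in Theorem \ref{tjurina} is $\leq 4d^2+O(d)$ after substituting $d_0=d+2$, which after multiplying into $4\tau(\F,C)$ could overshoot $3(d^2+d+1)$, so a direct plug-in might fail. To close this gap I would instead use the lower bound of Theorem \ref{tjurina} together with the upper bound, or better, combine the adjunction formula $\tau(\F,C)=d_0(d-1)-2g(C)-\sum_p(r_p-1)-\mu(C)+\tau(C)+2$ (the exact identity in Theorem \ref{tjurina}) with the sharper fact that $\mu_p(\F)=d^2+d+1$ must equal a very specific expression. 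In fact the cleanest route is probably: since $\F$ has its only singularity on $C$ and $C$ is invariant, every branch of $C$ at $p$ contributes, and Proposition \ref{prop_1} gives $\tau(\F,C)=\mu_p(\F,C)-\mu(C)+\tau(C)$ with $\mu_p(\F,C)\leq \mu_p(\F)=d^2+d+1$; feeding this and $3\mu_p(\F)<4\tau(\F,C)$ together yields $3(d^2+d+1)<4(d^2+d+1)-4\mu(C)+4\tau(C)$, i.e. $\mu(C)-\tau(C)<(d^2+d+1)/4$, which is automatically true and gives nothing — so I really must use $d_0\le d+2$ and the quantitative bounds carefully. I expect the decisive inequality to be the comparison $3\bigl(d^2+d+1\bigr)<4\tau(\F,C)\le 4\bigl[d_0(d_0-3)+d(d+1)+3\bigr]$ evaluated at $d_0\le d+2$ and then showing $g(C)\ge 1$ or $r_p\ge 2$ is forced, contradicting either the genus formula or the uniqueness of the singularity; reconciling these two incompatible requirements on $C$ is where the proof turns.
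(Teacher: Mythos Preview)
Your strategy does not close, and you essentially diagnose this yourself: after all the substitutions you arrive at $\mu(C)-\tau(C)<(d^2+d+1)/4$, which is automatic and yields nothing, and the alternative route through the upper bound of Theorem \ref{tjurina} gives right-hand side of order $4d^2$ against a left-hand side of order $3d^2$, so again no contradiction. The $4/3$ inequality and the global Tjurina bounds are simply too coarse for this problem; both sides scale the same way in $d$. A further technical issue is that your appeal to Theorem \ref{DG} with $C$ in place of $\B$ presupposes that $C$ is the \emph{total} separatrix set at $p$, which you never justify (it does follow from the logarithmic hypothesis, but you should say so). Finally, you only use $d_0\le d+2$, whereas for a logarithmic foliation one has the exact equality $d_0=d+2$; the slack in $\le$ is fatal when you are trying to compare two expressions that agree to top order.

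The paper's proof is of a completely different nature and avoids the Tjurina machinery altogether. The two facts you are missing are: (i) a logarithmic foliation is a generalized curve, so for a non-dicritical singularity the Milnor number of the full separatrix curve equals $\mu_p(\F)$; together with the unique-singularity hypothesis this forces $\mu_p(C)=d^2+d+1$; and (ii) P\l oski's local inequality $\mu_p(C)\le (d_0-1)(d_0-2)$ for an irreducible plane curve germ. Plugging $d_0=d+2$ into (ii) gives $\mu_p(C)\le d^2+d$, which contradicts (i) by exactly one unit. That is the entire argument; Theorems \ref{DG} and \ref{tjurina} play no role.
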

\begin{proof}
Suppose, for contradiction, that there exists a logarithmic, non-dicritical foliation $\F$ on $\mathbb{P}^2_\C$ of degree $d$, with a unique singular point $p$, having an invariant irreducible algebraic curve $C$ of degree $d_0\geq 2$ passing through $p$.
Since $\F$ is logarithmic, it follows that $\F$ is a generalized curve foliation at $p$ and $d_0=d+2$ (see \cite[Section 6]{FP-M}). Moreover, since $\F$ is non-dicritical at $p$, we have
\begin{equation}\label{eq_c}
\mu_p(C)=\mu_p(\F)=d^2+d+1
\end{equation}
 by \cite[Theorem 4]{CLS} and \cite[p. 19]{Brunella-book}. Now, since $C$ is irreducible, we can apply  P\l oski's inequality \cite[Lemma 2.2]{Ploski} to $C$ at $p$, obtaining
 \begin{equation}\label{eq_d}
 \mu_p(C)\leq (d_0-1)(d_0-2).
 \end{equation}
 Combining \eqref{eq_c}, \eqref{eq_d}, and using $d=d_0-2$, we get
 \[d^2+d+1=d_0^2-3d_0+3\leq d_0^2-3d_0+2,\]
 implying $1\leq 0$, which is a contradiction. This completes the proof.
\end{proof}

Now, we present an answer to the conjecture posed by Alc\'antara and Mozo-Fern\'andez in the case where $C$ is reduced.
\begin{theorem}
There does not exist a logarithmic, non-dicritical, quasi-homogeneous foliation $\F$ on $\mathbb{P}^2_\C$ of degree $d$, with a unique singular point $p$, that admits an invariant reduced algebraic curve $C$ of degree $d_0\geq 2$, containing all separatrices passing through $p$. 
\end{theorem}
\begin{proof}
Suppose, by contradiction, that there exists a logarithmic, non-dicritical, quasi-homogeneous foliation $\F$ on $\mathbb{P}^2_\C$ of degree $d$, with a unique singular point $p$, and an invariant reduced algebraic curve $C$ of degree $d_0\geq 2$ which contains all the separatrices passing through $p$.
\par We fix the singular point at $p=[0:0:1]$, and consider local coordinates $(x,y)\in\C^2$ centered at $p$. Since $C$ contains all the separatrices passing through $p$, and $\F$ is non-dicritical at $p$, it follows that $C$ is the total union of the separatrices of $\F$ at $p$. Moreover, if the germ of $C$ at $p$ is given by $C_p=\{f(x,y)=0\}$, then, by hypothesis, $f$ is quasi-homogeneous.
Hence, after a suitable change of variables, we can write
\[f(x,y)=u\cdot x^{m}\cdot y^{n}\Pi_{j=1}^{s}(y^k-\zeta_{j} x^{q}),\]
where $u$ is a unit, $m,n\in\{0,1\}$, $\gcd(k,q)=1$, and $\zeta_j\in\C^{*}$ for all $j=1,\ldots,s$. 
Without loss of generality, we may assume that $k<q$. Now, we analyze the possible branches of $f$. 
First, note that $m=0$, because the intersection $\{x=0\}\cap \{ y^k z^{q-k}-\zeta_{j} x^{q}=0\}$ consists of two points, namely $[0:0:1]$ and $[0:1:0]$, which would yield two singularities of $\F$, contradicting our hypothesis. 
Additionally, $C$ cannot have two different branches of the form $C_j=y^k-\zeta_{j} x^{q}$; otherwise, taking two such branches, say $C_i=y^k-\zeta_{i} x^{q}$ and $C_j=y^k-\zeta_{j} x^{q}$, we obtain, by B\'ezout's theorem 
\[I_p(C_i,C_j)=kq=q^2,\] which implies that $k=q$, yielding a contradiction. Consequently, we conclude that $f(x,y)=uy(y^k-\zeta x^{q})$, where $\zeta\in\C^{*}$. In particular, $d_0=q+1$, and since $\F$ is logarithmic,
$d=d_0-2=q-1$ by (\ref{eq_degree}).  Moreover, since $\F$ is a non-dicritical generalized curve foliation, we obtain by \cite[p. 19]{Brunella-book} and \cite[Theorem 4]{CLS}
\[d^2+d+1=\mu_p(\F)=\mu_p(C)=\mu_p(y)+\mu_p(y^k-\zeta x^{q})+2q-1,\] which implies that $q=k+1$. Thus, we conclude that $f(x,y)=uy(y^{k}-\zeta x^{k+1})$. In particular, $f$ has two branches and contains all separatrices of $\F$ passing through $p$. Now, observing that in homogeneous coordinates $[x:y:z]$ of $\mathbb{P}^2_{\C}$, the curve $C=\{y(y^{k}z-\zeta x^{k+1})=0\}$ is an algebraic curve invariant by $\mathcal{F}$, and since $d_0=q+1=k+2$ and $d=q-1=k$, we conclude that $\mathcal{F}$ can be defined by a closed logarithmic 1-form with poles along $C$, by Cerveau's Theorem \cite[Theorem 1.4]{Cerveau}. That is, there exist $\lambda_1, \lambda_2 \in \C^{*}$ satisfying $\lambda_1 + (k+1)\lambda_2 = 0$, and
\[
\mathcal{F}:\quad\Omega = \lambda_1 \frac{dy}{y} + \lambda_2 \frac{d(y^{k}z - \zeta x^{k+1})}{y^{k}z - \zeta x^{k+1}}.
\]
However, since $\lambda_1 + (k+1)\lambda_2 = 0$, we can write $\Omega$ as:
\[
\Omega = \lambda_2 \frac{y^{k+1}}{(y^{k}z - \zeta x^{k+1})} d\left( \frac{y^{k}z - \zeta x^{k+1}}{y^{k+1}} \right),
\]
which implies that $\mathcal{F}$ admits a rational first integral, and consequently, has a unique dicritical singularity at $p = [0:0:1]$. This contradicts the assumption that $\F$ is a non-dicritical foliation at $p$.
\end{proof}

\noindent{\bf{Acknowledgments}}\\
The authors are grateful to Marcelo E. Hernandes and Hern\'an Neciosup Puican for their remarks.\\
They would also like to thank the anonymous referees whose comments on a previous version of this article helped to improve both the content and the presentation.\\

\noindent{\bf{Funding information}}\\
The authors gratefully acknowledge the support of Universidad de La Laguna (Tenerife, Spain), where part of this work was done: Spanish grants PID2019-105896GB-I00 funded by MCIN/AEI/10.13039/501100011033 and PID2023-149508NB-I00 funded by 
MICIU/AEI
/10.13039/501100011033 and by
FEDER, UE. This work was  also funded by the Direcci\'on de Fomento de la Investigaci\'on at the PUCP through grant DFI-2024-PI1100. 
The first author acknowledges support from CNPq Projeto Universal 408687/2023-1 "Geometria das Equa\c{c}\~oes Diferenciais Alg\'ebricas" and CNPq-Brazil PQ-306011/2023-9.\\

\noindent{\bf Data availability}\\
We do not analyze or generate any datasets, because our work proceeds within a theoretical approach.\\

\noindent{\bf Conflict of interest}\\
The authors declare that they have no conflict of interest.

\end{document}